\definecolor {processblue}{cmyk}{0.96,0,0,0}
\theoremstyle{definition}
\newtheorem{theorem}{Theorem}
\newtheorem{lemma}{Lemma}
\newtheorem{cor}{Corollary}
\newtheorem{rk}{Remark}
\newtheorem{df}{Definition}
\newtheorem{prop}{Proposition}
\newcommand{\field}[1]{\mathbb{#1}}
\newcommand{\G}{\field{G}}
\newcommand{\C}{\field{C}}
\newcommand{\W}{\field{W}}
\journal{Linear Algebra and its Applications}
\begin{document}

\begin{frontmatter}

\title{Generalized Eigenvectors of Isospectral Transformations, Spectral Equivalence and Reconstruction of Original Networks}

\author{Leonid Bunimovich}
\ead{bunimovh@math.gatech.edu}

\author{Longmei Shu\corref{cor1}}
\address{School of Mathematics,
Georgia Institute of Technology,
Atlanta, GA 30332-0160 USA}

\cortext[cor1]{Corresponding author}

\ead{lshu6@math.gatech.edu}

\begin{abstract}
Isospectral transformations (IT) of matrices and networks allow for compression of either object while keeping all the information about their eigenvalues and eigenvectors. We analyze here what happens to generalized eigenvectors under isospectral transformations and to what extent the initial network can be reconstructed from its compressed image under IT. We also generalize and essentially simplify the proof that eigenvectors  are invariant under isospectral transformations and generalize and clarify the notion of spectral equivalence of networks.
\end{abstract}

\begin{mscc}
05C50 \sep 15A18
\end{mscc}

\begin{keyword}
isospectral transformations \sep generalized eigenvectors \sep spectral equivalence
\end{keyword}

\end{frontmatter}

\section{Introduction}
\label{sec1}

The recently developed theory of Isospectral Transformations (IT) of matrices and networks allowed for advances in various areas and led to several surprising results \cite{webb14}. The effectiveness of these applications raises a natural question regarding the possible limits of this approach. Although the theory of isospectral transformations was initially aimed at reduction (i.e. simplification) of networks while keeping all the information about the spectrum of their weighted adjacency, Laplace, or other matrices generated by a network, it turned out \cite{torres14} that all the information about the eigenvectors of these matrices also gets preserved under ITs.

Therefore it is natural to ask what network information may not be preserved after isospectral compression. The main goal of the present paper is to answer this question. It is shown that generalized eigenvectors typically are not preserved under ITs. We also establish some sufficient conditions under which the information about generalized eigenvectors is preserved under ITs. Some new properties of ITs are found, regarding classes of spectrally equivalent matrices and networks. Particularly it is demonstrated that there are essential differences between the standard notion of isospectral matrices and spectral equivalence of networks. A new proof of the preservation of eigenvectors under ITs is given which is shorter and applicable to a more general situation than the one in \cite{torres14}.

\section{Isospectral Graph Reductions}\label{graph-red}

In this section we recall definitions of the isospectral transformations of graphs and networks.

Let $\W$ be the set of rational functions of the form $w(\lambda)=p(\lambda)/q(\lambda)$, where $p(\lambda),q(\lambda)\in\C[\lambda]$ are polynomials having no common linear factors, i.e., no common roots, and where $q(\lambda)$ is not identically zero. $\W$ is a field under addition and multiplication \cite{webb14}.

Let $\G$ be the class of all weighted directed graphs with edge weights in $\W$. More precisely, a graph $G\in\G$ is an ordered triple $G=(V,E,w)$ where $V=\{1,2,\dots,n\}$ is the \emph{vertex set}, $E\subset V\times V$ is the set of \emph{directed edges}, and $w:E\to\W$ is the \emph{weight function}. Denote by $M_G=(w(i,j))_{i,j\in V}$ the \emph{weighted adjacency matrix} of $G$, with the convention that $w(i,j)=0$ whenever $(i,j)\not\in E$. We will alternatively refer to graphs as networks because weighted adjacency matrices define all static (i.e. non evolving) real world networks.

Observe that the entries of $M_G$ are rational functions. Let's write $M_G(\lambda)$ instead of $M_G$ here to emphasize the role of $\lambda$ as a variable. For $M_G(\lambda)\in\W^{n\times n}$, we define the spectrum, or multiset of eigenvalues to be $$\sigma(M_G(\lambda))=\{\lambda\in\C:\det(M_G(\lambda)-\lambda I)=0\}.$$ 
Notice that $\sigma(M_G(\lambda))$ can have more than $n$ elements, some of which can be the same. 

Throughout the rest of the paper, the spectrum is understood to be a set that includes multiplicities. The element $\alpha$ of the multiset $A$ has multiplicity $m$ if there are $m$ elements of $A$ equal to $\alpha$. If $\alpha\in A$ with multiplicity $m$ and $\alpha\in B$ with multiplicity $n$, then

(i) the union $A\cup B$ is a multiset in which $\alpha$ has multiplicity $m+n$; and

(ii) the difference $A-B$ is a multiset in which $\alpha$ has multiplicity $m-n$ if $m-n>0$ and where $\alpha\not\in A-B$ otherwise.

Similarly, the multiset $A\subset B$ means for any $\alpha\in A$, we have $\alpha\in B$, and the multiplicity of $\alpha$ in $A$, is less than or equal to the mutliplicity of $\alpha$ in $B$.

An eigenvector for eigenvalue $\lambda_0\in\sigma(M_G(\lambda))$ is defined to be $u\in\C^n,u\neq0$ such that 
$$M_G(\lambda_0)u=\lambda_0u.$$ 
One can see the eigenvectors of $M_G(\lambda)\in\W^{n\times n}$ for $\lambda_0$ are the same as the eigenvectors of $M_G(\lambda_0)\in\C^{n\times n}$ for $\lambda_0$. Similarly the generalized eigenvectors of $M_G(\lambda)$ for $\lambda_0$ are the generalized eigenvectors of $M_G(\lambda_0)$ for $\lambda_0$.

A path $\gamma=(i_0,\dots,i_p)$ in the graph $G=(V,E,w)$ is an ordered sequence of distinct vertices $i_0,\dots,i_p\in V$ such that $(i_l,i_{l+1})\in E$ for $0\le l\le p-1$. The vertices $i_1,\dots,i_{p-1}\in V$ of $\gamma$ are called \emph{interior vertices}. If $i_0=i_p$ then $\gamma$ is a \emph{cycle}. A cycle is called a \emph{loop} if $p=1$ and $i_0=i_1$. The length of a path $\gamma=(i_0,\dots,i_p)$ is the integer $p$. Note that there are no paths of length 0 and that every edge $(i,j)\in E$ is a path of length 1.

If $S\subset V$ is a subset of all the vertices, we will write $\overline S=V\setminus S$ and denote by $|S|$ the cardinality of the set $S$.

\begin{df}
\emph{(structural set).} Let $G=(V,E,w)\in\G$. A nonempty vertex set $S\subset V$ is a structural set of $G$ if 
\begin{itemize}
\item each cycle of $G$, that is not a loop, contains a vertex in $S$;
\item $w(i,i)\neq\lambda$ for each $i\in\overline S$.
\end{itemize}

$S$ is called a $\lambda_0-$structural set if a structural set $S$ also satisfies $w(i,i)\neq\lambda_0,\forall i\in\overline S$ for some $\lambda_0\in\C$.
\end{df}

\begin{df}
Given a structural set $S$, a \emph{branch} of $(G,S)$ is a path $\beta=(i_0,i_1,\dots,i_{p-1},i_p)$ such that  $i_0,i_p\in V$ and all $i_1,\dots,i_{p-1}\in\overline S$.
\end{df}
We denote by $\mathcal{B}=\mathcal{B}_{G,S}$ the set of all branches of $(G,S)$.
Given vertices $i,j\in V$, we denote by $\mathcal{B}_{i,j}$ the set of all branches in $\mathcal{B}$ that start in $i$ and end in $j$. For each branch $\beta=(i_0,i_1,\dots,i_{p-1},i_p)$ we define the \emph{weight} of $\beta$ as follows:
\begin{equation}
w(\beta,\lambda):=w(i_0,i_1)\prod_{l=1}^{p-1}\frac{w(i_l,i_{l+1})}{\lambda-w(i_l,i_l)}.
\end{equation}

Given $i,j\in V$ set
\begin{equation}
R_{i,j}(G,S,\lambda):=\sum_{\beta\in\mathcal{B}_{i,j}}w(\beta,\lambda).
\end{equation}

\begin{df}
\emph{(Isospectral reduction).} Given $G\in\G$ and a structural set $S$, the reduced adjacency matrix  $R_S(G,\lambda)$ is the $|S|\times |S|-$matrix with the entries $R_{i,j}(G,S,\lambda),i,j\in S$. This adjacency matrix $R_S(G,\lambda)$ on $S$ defines the reduced graph which is the isospectral reduction of the original graph $G$.
\end{df}


\section{Generalized eigenvectors of isospectral graph reductions}

Let $\lambda_0$ be an eigenvalue of $M_G(\lambda)$ with multiplicity at least 2, and let $u=(u_1,u_2,\dots,u_n)\in\C^n$ be the corresponding eigenvector, i.e. $M_G(\lambda_0)u=\lambda_0u$. Let $v=(v_1,v_2,\dots,v_n)\in\C^n$ be the corresponding rank 2 generalized eigenvector, i.e. $M_G(\lambda_0)v-\lambda_0v=u$. Without any loss of generality we may assume that $S=\{m+1,\dots,n\}$ is a $\lambda_0-$structural set . 
It is known that $\lambda_0$ is also an eigenvalue of $R_S(G,\lambda)$, i.e. $R_S(G,\lambda_0)u_S=\lambda_0u_S$, where $u_S=(u_{m+1},\dots,u_n)$ is the restriction of $u$ to $S$. We will refer to this property from now on as the preservation of eigenvectors.
Our goal in this section is to see what happens to
generalized eigenvectors under isospectral transformations.
\begin{theorem}\label{entry-wise}
Let $S$ be a $\lambda_0-$structural set of a graph $G=(V,E,w)$. $M_G(\lambda)$ is the adjacency matrix of $G$. $u,v\in\C^n$ are the eigenvector and generalized eigenvector for $M_G(\lambda)$ such that $M_G(\lambda_0)u=\lambda_0u,M_G(\lambda_0)v-\lambda_0v=u$. Then if there is a $c\in\C$ such that $c\neq-1$ and
\begin{equation}\label{gen-eig}
\sum_{l\in\overline S}\frac{R_{il}(\lambda_0)}{\lambda_0-\omega(l,l)}u_l=cu_i,\forall i\in S,
\end{equation}
then $R_S(G,\lambda_0)v_S-\lambda_0v_S=(1+c)u_S$.
\end{theorem}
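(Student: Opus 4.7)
The plan is to work with the block decomposition $V = S \cup \overline{S}$ and reduce the statement to a Schur-complement manipulation. Writing $M_G(\lambda_0)$ in block form with blocks $M_{SS}, M_{S\overline{S}}, M_{\overline{S}S}, M_{\overline{S}\overline{S}}$, the $\overline{S}$-rows of $M_G(\lambda_0)u = \lambda_0 u$ yield $(\lambda_0 I - M_{\overline{S}\overline{S}})\, u_{\overline{S}} = M_{\overline{S}S}\, u_S$, while the $\overline{S}$-rows of $M_G(\lambda_0)v = \lambda_0 v + u$ yield $(\lambda_0 I - M_{\overline{S}\overline{S}})\, v_{\overline{S}} = M_{\overline{S}S}\, v_S - u_{\overline{S}}$. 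The $\lambda_0$-structural set hypothesis makes $\lambda_0 I - M_{\overline{S}\overline{S}}$ invertible: after topologically ordering $\overline{S}$ (possible because no cycle of length $\geq 2$ lies entirely in $\overline{S}$), this matrix is triangular with nonzero diagonal entries $\lambda_0 - w(l,l)$.

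Next, I would solve the second block equation for $v_{\overline{S}}$ and substitute into the $S$-rows of $M_G(\lambda_0)v = \lambda_0 v + u$, namely $M_{SS} v_S + M_{S\overline{S}} v_{\overline{S}} = \lambda_0 v_S + u_S$. Using the Schur identity
$$R_S(G,\lambda_0) = M_{SS} + M_{S\overline{S}}\, (\lambda_0 I - M_{\overline{S}\overline{S}})^{-1}\, M_{\overline{S}S},$$
this rearranges to
$$R_S(G,\lambda_0)\, v_S - \lambda_0 v_S = u_S + M_{S\overline{S}}\, (\lambda_0 I - M_{\overline{S}\overline{S}})^{-1}\, u_{\overline{S}}.$$
The conclusion then follows once the extra term is identified with $c\, u_S$.

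The remaining step, which I expect to be the main obstacle, is to show that for $i \in S$ and $l \in \overline{S}$,
$$\bigl(M_{S\overline{S}}\, (\lambda_0 I - M_{\overline{S}\overline{S}})^{-1}\bigr)_{il} = \frac{R_{il}(\lambda_0)}{\lambda_0 - w(l,l)},$$
where $R_{il}(\lambda_0) = \sum_{\beta \in \mathcal{B}_{il}} w(\beta,\lambda_0)$ makes sense because branches are defined for any pair of endpoints in $V$. This is done by expanding the resolvent as a (terminating) Neumann series and matching each summand $w(i,k_0) \prod_{j} w(k_{j-1},k_j)/(\lambda_0 - w(k_{j-1},k_{j-1}))$ with the weight of a branch from $i$ through $\overline{S}$ ending at $l$; the absence of cycles in $\overline{S}$ guarantees every walk appearing is already a path, so the sum is exactly the stated quotient. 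Plugging this identity into the boxed equation above and applying the hypothesis $\sum_{l\in\overline{S}} R_{il}(\lambda_0)\, u_l/(\lambda_0 - w(l,l)) = c\, u_i$ gives $R_S(G,\lambda_0)\, v_S - \lambda_0 v_S = (1+c)\, u_S$. The condition $c \neq -1$ plays no role in the derivation itself; it is presumably imposed so that the reduced $v_S$ remains a genuine rank-$2$ generalized eigenvector of $R_S(G,\lambda_0)$ rather than collapsing into the $\lambda_0$-eigenspace.
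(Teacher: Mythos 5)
Your proof is correct, but it takes a genuinely different route from the one the paper gives for this theorem. The paper's proof is a direct entry-wise computation: it solves equation \eqref{vl} for each $v_l$, $l\in\overline S$, substitutes repeatedly into the $S$-rows, and tracks the resulting branch weights $R^{(p)}_{ik}$ by induction on the branch length, using the absence of non-loop cycles in $\overline S$ to guarantee termination after $m+1$ steps. You instead run the Schur-complement argument on the block decomposition, arriving at $R_S(G,\lambda_0)v_S-\lambda_0 v_S=u_S+M_{S\overline S}(\lambda_0 I-M_{\overline S\overline S})^{-1}u_{\overline S}$, and then convert the correction term into the entry-wise form $\sum_{l\in\overline S}R_{il}(\lambda_0)u_l/(\lambda_0-w(l,l))$ by expanding the resolvent as a terminating Neumann series $\sum_{k\ge 0}\bigl((\lambda_0 I-D)^{-1}N\bigr)^k(\lambda_0 I-D)^{-1}$ and matching summands with branches. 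This is essentially the strategy the paper itself adopts later, in its ``Block Matrix Approach'' section, where the same condition reappears in block form as \eqref{blockgen}; your contribution is to make explicit the bridge $\bigl(M_{S\overline S}(\lambda_0 I-M_{\overline S\overline S})^{-1}\bigr)_{il}=R_{il}(\lambda_0)/(\lambda_0-w(l,l))$, which the paper leaves implicit (it invokes the equivalence of the graph and matrix reductions from the literature rather than proving it). The combinatorial content is the same in both proofs --- the identification of resolvent walks with branches, valid because every walk in $\overline S$ with distinct consecutive vertices is automatically a path --- but your packaging is shorter and makes the invertibility of $\lambda_0 I-M_{\overline S\overline S}$ (triangular with diagonal $\lambda_0-w(l,l)\neq 0$ after topological ordering) do the bookkeeping that the paper's induction does by hand. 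Your closing observation about $c\neq-1$ is also right: the identity holds for any $c$, and the hypothesis only serves to ensure $v_S$ is a genuine rank-2 generalized eigenvector of the reduced matrix. One small point worth flagging: your sign convention $M_{S\overline S}(\lambda_0 I-M_{\overline S\overline S})^{-1}u_{\overline S}=cu_S$ is the one consistent with \eqref{gen-eig} and with the conclusion $(1+c)u_S$; the paper's later equation \eqref{blockgen} is written with $(M_{\overline S\overline S}-\lambda_0 I)^{-1}$, which flips the sign of $c$, so be careful if you try to reconcile the two.
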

We first introduce some useful notations before proceeding to the proof of theorem \ref{entry-wise}.

Given vertices $i,j\in V$, we denote by $\mathcal{B}_{i,j}^{(p)}$ the set of all branches in $\mathcal{B}$ of length $p$ that start at $i$ and end at $j$. For any $i,j\in V$ set
$$R_{i,j}^{(p)}(G,S,\lambda):=\sum_{\beta\in\mathcal{B}_{i,j}^{(p)}}w(\beta,\lambda).$$
Therefore the  reduced weights $R_{i,j}(G,S,\lambda)$ for $S=\{m+1,\dots,n\}$ satisfy \begin{align*}R_{i,j}(G,S,\lambda)=\sum_{p=1}^{m+1}R_{i,j}^{(p)}(G,S,\lambda),\forall i,j\in S.\\
R_{i,j}(G,S,\lambda)=\sum_{p=1}^{m-1}R_{i,j}^{(p)}(G,S,\lambda),\forall i,j\in\overline{S},i\neq j.\\
R_{i,i}(G,S,\lambda)=w(i,i),\forall i\in\overline{S}.\\
R_{i,j}(G,S,\lambda)=\sum_{p=1}^mR_{i,j}^{(p)}(G,S,\lambda),\forall i\in S, j\in\overline{S}\text{ or }i\in\overline{S}, j\in S.
\end{align*}
To simplify notations we will write $R_{i,j}$ and $R_{i,j}^{(p)}$ instead of $R_{i,j}(G,S,\lambda_0)$ and $R_{i,j}^{(p)}(G,S,\lambda_0)$, respectively.
\begin{proof}
Write $v=(v_{\overline S},v_S)$, where $v_{\overline S}=(v_l)_{l\in\overline S}$ and $v_S=(v_i)_{i \in S}$. Since $M_G(\lambda_0)v=\lambda_0v+u$, we have for all $l\in\overline S$, (for convenience, all $w(i,j)$ mean $w(i,j)(\lambda_0)$ in the proof)

$$\sum_{k\in S}\omega(l,k)v_k+\omega(l,l)v_l+\sum_{l_1\in\overline S,l_1\neq l}\omega(l,l_1)v_{l_1}=\lambda_0v_l+u_l.$$
Therefore,
\begin{equation}\label{vl}
v_l=\sum_{k\in S}\frac{\omega(l,k)}{\lambda_0-\omega(l,l)}v_k+\sum_{l_1\in\overline S,l_1\neq l}\frac{\omega(l,l_1)}{\lambda_0-\omega(l,l)}v_{l_1}-\frac{u_l}{\lambda_0-\omega(l,l)}.\end{equation}
Analogously for all $i\in S$,
$$v_i=\sum_{k\in S,k\neq i}\frac{\omega(i,k)}{\lambda_0-\omega(i,i)}v_k+\sum_{l\in\overline S}\frac{\omega(i,l)}{\lambda_0-\omega(i,i)}v_l-\frac{u_i}{\lambda_0-\omega(i,i)}.$$
Substituting $v_l$'s above by \eqref{vl} gives,
\begin{align*}
&v_i=\sum_{k\in S,k\neq i}\frac{R^{(1)}_{ik}}{\lambda_0-\omega(i,i)}v_k+\sum_{k\in S,l\in\overline S}\frac{\omega(i,l)\omega(l,k)}{[\lambda_0-\omega(i,i)][\lambda_0-\omega(l,l)]}v_k\\
&+\sum_{l_1,l\in\overline S,l_1\neq l}\frac{\omega(i,l)\omega(l,l_1)}{[\lambda_0-\omega(i,i)][\lambda_0-\omega(l,l)]}v_{l_1}-\sum_{l\in\overline S}\frac{\omega(i,l)}{[\lambda_0-\omega(i,i)][\lambda_0-\omega(l,l)]}u_l\\
&-\frac{u_i}{\lambda_0-\omega(i,i)}\\
&=\sum_{k\in S,k\neq i}\frac{R_{ik}^{(1)}}{\lambda_0-\omega(i,i)}v_k+\sum_{k\in S}\frac{R_{ik}^{(2)}}{\lambda_0-\omega(i,i)}v_k+\sum_{l_1,l\in\overline S}\frac{\omega(i,l)\omega(l,l_1)}{[\lambda_0-\omega(i,i)][\lambda_0-\omega(l,l)]}v_{l_1}\\
&-\sum_{l\in\overline S}\frac{\omega(i,l)}{[\lambda_0-\omega(i,i)][\lambda_0-\omega(l,l)]}u_l-\frac{u_i}{\lambda_0-\omega(i,i)}.
\end{align*}
Proceeding inductively, we get
\begin{align*}
&v_i=\sum_{k\in S,k\neq i}\frac{R_{ik}^{(1)}}{\lambda_0-\omega(i,i)}v_k+\sum_{k\in S}\frac{R^{(2)}_{ik}}{\lambda_0-\omega(i,i)}v_k+\dots+\sum_{k\in S}\frac{R_{ik}^{(p)}}{\lambda_0-\omega(i,i)}v_k\\
&+\sum_{l_1,\dots,l_{p-1},l\in\overline S,l_r\neq l_s,l_s\neq l}\frac{\omega(i,l)\omega(l,l_1)\omega(l_1,l_2)\dots\omega(l_{p-2},l_{p-1})}{[\lambda_0-\omega(i,i)][\lambda_0-\omega(l,l)][\lambda_0-\omega(l_1,l_1)]\dots[\lambda_0-
\omega(l_{p-2},l_{p-2})]}v_{l_{p-1}}\\
&-\sum_{l_{p-2}\in\overline S}\frac{R_{il_{p-2}}^{(p-1)}}{[\lambda_0-\omega(i,i)][\lambda_0-\omega(l_{p-2},l_{p-2})]}u_{l_{p-2}}-\sum_{l_{p-3}\in\overline S}\frac{R_{il_{p-3}}^{(p-2)}}{[\lambda_0-\omega(i,i)][\lambda_0-\omega(l_{p-3},l_{p-3})]}u_{l_{p-3}}-\dots\\
&-\sum_{l\in\overline S}\frac{R_{il}^{(1)}}{[\lambda_0-\omega(i,i)][\lambda_0-\omega(l,l)]}u_l-\frac{u_i}{\lambda_0-\omega(i,i)}.
\end{align*}
The indices in the sums above which are in $\overline S$ are all distinct; because there are no non-loop cycles in $\overline S$. Since $\overline S$ has $m$ elements, after $m+1$ steps we obtain the relation
\begin{align*}
&v_i=\sum_{k\in S,k\neq i}\frac{R_{ik}^{(1)}}{\lambda_0-\omega(i,i)}v_k+\sum_{k\in S}\frac{R_{ik}^{(2)}}{\lambda_0-\omega(i,i)}v_k+\dots+\sum_{k\in S}\frac{R_{ik}^{(m+1)}}{\lambda_0-\omega(i,i)}v_k\\
&-\sum_{l_{m-1}\in\overline S}\frac{R_{il_{m-1}}^{(m)}}{[\lambda_0-\omega(i,i)][\lambda_0-\omega(l_{m-1},l_{m-1})]}u_{l_{m-1}}-\sum_{l_{m-2}\in\overline S}\frac{R_{il_{m-2}}^{(m-1)}}{[\lambda_0-\omega(i,i)][\lambda_0-\omega(l_{m-2},l_{m-2})]}u_{l_{m-2}}\\
&-\dots-\sum_{l\in\overline S}\frac{R_{il}^{(1)}}{[\lambda_0-\omega(i,i)][\lambda_0-\omega(l,l)]}u_l-\frac{u_i}{\lambda_0-\omega(i,i)}.
\end{align*}
Therefore,
$$[\lambda_0-\omega(i,i)]v_i+\sum_{l\in\overline S}\frac{R_{il}}{\lambda_0-\omega(l,l)}u_l+u_i=\sum_{k\in S,k\neq i}R_{ik}v_k+\sum_{p=2}^{m+1}R_{ii}^{(p)}v_i,$$
$$\lambda_0v_i+u_i+\sum_{l\in\overline S}\frac{R_{il}}{\lambda_0-\omega(l,l)}u_l=\sum_{k\in S}R_{ik}v_k.$$

And finally,

$$\sum_{k\in S}R_{ik}v_k-\lambda_0v_i=(1+c)u_i,\forall i \in S,$$

which implies

$$R_S(G,\lambda_0)v_S=\lambda_0v_S+(1+c)u_S.$$
\end{proof}

We say that the generalized eigenvector $v$ is preserved if relation \eqref{gen-eig} holds. Indeed it is easy to see in this case that the projection of the generalized eigenvector to $S$ is a generalized eigenvector for the reduced adjacency matrix.
\begin{rk}
Observe that we didn't use anywhere in this proof the fact that $u$ is an eigenvector. Therefore the same proof is readily applicable to generalized eigenvectors of higher ranks. One just needs to use the rank $k$ generalized eigenvector in place of $u$ and the rank $k+1$ generalized eigenvector in place of $v$.
\end{rk}
\begin{rk}
The proof is very similar to the one given in \cite{torres14}. However, we allow the weights of the original graph to take rational functions. One can check the requirement in \cite{torres14} for the weights to be complex numbers before reduction is not necessary for the proof to work. Also the weights of the reduced graph are rational functions instead of complex numbers only. The result in \cite{torres14} would only apply to the 1st reduction, even though the preservation of eigenvectors carries through a sequence of reductions (this will be further discussed in the next section).
\end{rk}

Clearly the complement to a single vertex is a structural set of a network (graph). The following statement demonstrates that by isospectrally removing a single element (vertex) of a network (graph) one gets a much simpler condition than in Theorem \ref{entry-wise}.

\begin{theorem}\label{1p}
Let $G=(V,E,w)\in\G$ be a graph with $n$ nodes and with adjacency matrix $M_G(\lambda)$. Let $\lambda_0\in\sigma(M_G(\lambda))$ be a repeated eigenvalue and let $S\subset V$ be a $\lambda_0-$structural set which has $n-1$ nodes. Suppose $\overline S=V\setminus S=\{j\}$. Then the generalized eigenvector is preserved iff $\omega(i,j)=cu_i,\forall i\in S$ for some $c\in\C$ where $u$ is an eigenvector of $M_G(\lambda)$ for eigenvalue $\lambda_0$.
\end{theorem}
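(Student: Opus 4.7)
The plan is to specialize Theorem \ref{entry-wise} to the degenerate case $|\overline S|=1$ and observe that the preservation hypothesis \eqref{gen-eig} collapses to an explicit single-term proportionality. The first step is to enumerate the branches from $i\in S$ to $j\in\overline S$. Any interior vertex of such a branch must lie in $\overline S=\{j\}$, but a branch is a path with distinct vertices, so no interior vertex can equal $j$. Hence the only admissible branch is the direct edge $(i,j)$, giving $R_{ij}(\lambda_0)=\omega(i,j)$, so the sum in \eqref{gen-eig} reduces to a single term.

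With this observation, condition \eqref{gen-eig} becomes
$$\frac{\omega(i,j)\,u_j}{\lambda_0-\omega(j,j)}=c\,u_i,\quad\forall i\in S,$$
where $\lambda_0-\omega(j,j)\neq 0$ by the $\lambda_0$-structural hypothesis on $S$. When $u_j\neq 0$, setting $c':=c(\lambda_0-\omega(j,j))/u_j$ rewrites this as $\omega(i,j)=c'\,u_i$, and the substitution runs both directions, supplying the iff: the forward implication reads off $c'$ from $c$; the backward implication sets $c := c'u_j/(\lambda_0-\omega(j,j))$ and substitutes back to recover \eqref{gen-eig}. Note also that the admissibility constraint $c\neq -1$ in Theorem \ref{entry-wise} translates into a single excluded value of $c'$, which is a mild restriction I expect to handle in passing.

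The main obstacle is the edge case $u_j=0$. Here \eqref{gen-eig} forces $c\,u_i=0$ for all $i\in S$; since $u_S$ is nonzero (it is an eigenvector of $R_S(G,\lambda_0)$ by preservation of eigenvectors), this yields $c=0$, so the preservation condition becomes automatic. One would then verify separately that the existence of some $c'\in\C$ with $\omega(i,j)=c'u_i$ is compatible with this by invoking the $j$-th row of $M_G(\lambda_0)u=\lambda_0 u$, which, when $u_j=0$, decouples from the $\omega(i,j)$'s and pins down the relation between the $\omega(i,j)$'s and the $u_i$'s. Apart from this bookkeeping, the theorem follows from Theorem \ref{entry-wise} by a one-line substitution exploiting the trivialization of branch combinatorics when $\overline S$ is a singleton.
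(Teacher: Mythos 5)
Your main argument is exactly the paper's proof: with $\overline S=\{j\}$ every branch from $i\in S$ to $j$ is the single edge $(i,j)$, so $R_{ij}=\omega(i,j)$, condition \eqref{gen-eig} collapses to $\frac{\omega(i,j)u_j}{\lambda_0-\omega(j,j)}=cu_i$, and absorbing the nonzero factor $(\lambda_0-\omega(j,j))/u_j$ into the constant gives the stated proportionality. That part is correct and is all the paper does.

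The one genuine defect is your handling of the edge case $u_j=0$. You are right that this case is the obstacle to the ``only if'' direction: when $u_j=0$ the left-hand side of \eqref{gen-eig} vanishes identically, the condition is satisfied with $c=0$ no matter what the weights $\omega(i,j)$ are, and so preservation cannot force $\omega(i,j)=c'u_i$. But your proposed rescue does not work: the $j$-th row of $M_G(\lambda_0)u=\lambda_0u$ reads $\sum_k\omega(j,k)u_k=\lambda_0u_j$, which constrains the \emph{outgoing} weights $\omega(j,k)$, whereas the theorem is about the \emph{incoming} weights $\omega(i,j)$; nothing in the eigenvector equation pins those down. So in the case $u_j=0$ the ``only if'' direction is simply false as stated, and the correct fix is to assume $u_j\neq0$ (an assumption the paper's own one-line proof also makes silently when it divides by $u_j$), not to derive the proportionality from the eigenvector equation. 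Your remark that $c\neq-1$ excludes one value of $c'$ is correct and is likewise glossed over in the paper.
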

\begin{proof}
We have in this case
\begin{equation}\label{1node}
\sum_{l\in\overline S}\frac{R_{il}}{\lambda_0-\omega(l,l)}u_l=\frac{R_{ij}}{\lambda_0-\omega(j,j)}u_j=c_1u_i
\end{equation}
Since $\overline S=\{j\}$, $R_{ij}=\omega(i,j)$, the relation \eqref{1node} is equivalent to $\omega(i,j)=cu_i,\forall i\in S$.
\end{proof}
\begin{cor}
Let $\overline S=\{1,\dots,m\}$ be such that the weighted graph induced by $G$ on $\overline S$ is totally disconnected, i.e. there are no edges between vertexes in $\overline S$. Then $R_{il}=\omega(i,l),\forall i\in S,l\in\overline S$ and condition \eqref{gen-eig} becomes 
\begin{equation}\label{discon}
\sum_{l\in\overline S}\frac{\omega(i,l)}{\lambda_0-\omega(l,l)}u_l=cu_i,\forall i\in S.
\end{equation}
Hence in this case the generalized eigenvector is preserved iff \eqref{discon} is true.
\end{cor}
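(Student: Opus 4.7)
The corollary has two components to verify: the identity $R_{il}=\omega(i,l)$ for every $i\in S$ and $l\in\overline S$, and then the ``iff'' assertion. Once the identity is established, substituting it into \eqref{gen-eig} produces \eqref{discon} verbatim, so the ``iff'' follows immediately from the definition of preservation recorded in the remark just after Theorem \ref{entry-wise}, which takes the validity of \eqref{gen-eig} as the defining condition.

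To obtain $R_{il}=\omega(i,l)$, I would unpack the definition $R_{il}=\sum_{\beta\in\mathcal{B}_{i,l}}w(\beta,\lambda_0)$ and classify branches $\beta=(i_0,i_1,\dots,i_{p-1},i_p)$ from $i_0=i\in S$ to $i_p=l\in\overline S$ by their length $p$. A length-$1$ branch is just the direct edge $(i,l)$ and contributes $\omega(i,l)$. A branch of length $p\ge 2$ has $i_{p-1}\in\overline S$ as an interior vertex, so its terminal edge $(i_{p-1},l)$ would join two vertices of $\overline S$; the totally-disconnected hypothesis on $\overline S$ forbids such edges, so no branch of length $\ge 2$ can exist. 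Therefore the sum defining $R_{il}$ collapses to the single term $\omega(i,l)$.

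With $R_{il}=\omega(i,l)$ in hand, condition \eqref{gen-eig} becomes exactly \eqref{discon}, and preservation of the generalized eigenvector — being by definition the validity of \eqref{gen-eig} — is now equivalent to \eqref{discon}. No real obstacle arises; the only mild care needed is the vacuous handling of the $p=1$ case, where the interior-vertex requirement $i_1,\dots,i_{p-1}\in\overline S$ is empty, so the direct edge $(i,l)$ does qualify as a branch and the above classification is exhaustive.
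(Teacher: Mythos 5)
Your proposal is correct and matches the argument the paper leaves implicit (the corollary is stated without a written proof): the totally-disconnected hypothesis on $\overline S$ kills every branch from $i\in S$ to $l\in\overline S$ of length $\ge 2$, since its terminal edge $(i_{p-1},l)$ would join two distinct vertices of $\overline S$, so $R_{il}$ collapses to the single length-one term $\omega(i,l)$ and \eqref{gen-eig} becomes \eqref{discon}. The ``iff'' then follows from the paper's definitional convention that preservation of the generalized eigenvector means the validity of \eqref{gen-eig}, exactly as you note.
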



\section{Block Matrix Approach}

The proof of Theorem \ref{entry-wise} is an entry by entry computation based on the isospectral graph reduction. Here we will use block matrices and look at the problem from the perspective of the isospectral matrix reduction, which is more general than the isospectral graph reduction because it has fewer requirements \cite{webb14}.

For any matrix $M\in\W^{n\times n}$, and any partition $S\cup\overline S=\{1,2,\dots,n\},S\cap\overline S=\varnothing$, by permutation or renaming the nodes, we can always write the matrix as $M=\begin{pmatrix}M_{\overline S\overline S} & M_{\overline SS}\\M_{S\overline S} & M_{SS}\end{pmatrix}$. The isospectral matrix reduction of $M$ onto $S$ is defined as $$R_S=M_{SS}-M_{S\overline S}(M_{\overline S\overline S}-\lambda I)^{-1}M_{\overline SS}.$$
The only requirement for $S$ here is that the inverse matrix $(M_{\overline S\overline S}-\lambda I)^{-1}$ exists. This is a more general condition than that of the isospectral graph reduction. Indeed for the isospectral graph reduction, there must be no non-loop cycles in $\overline S$, which means that under permutation $M_{\overline S\overline S}$ is a triangular matrix. Also, the weights of loops in $\overline S$ are not equal to $\lambda$. This ensures $M_{\overline S\overline S}-\lambda I$ is invertible, but it's a stronger condition.

However, when both of these conditions hold the isospectral matrix reduction gives the same reduced matrix as the isospectral graph reduction (theorem 2.1 \cite{webb14}).

We will show now that the preservation of eigenvectors directly follows from the definition of isospectral matrix reduction. 

Suppose $u$ is an eigenvector such that $M(\lambda_0)u=\lambda_0 u,\lambda_0\in\sigma(M(\lambda))$. Write $u=\begin{pmatrix}u_{\overline S}\\u_S\end{pmatrix}$. Then we have
$$M(\lambda_0)u=\begin{pmatrix}M_{\overline S\overline S}(\lambda_0) & M_{\overline SS}(\lambda_0)\\M_{S\overline S}(\lambda_0) & M_{SS}(\lambda_0)\end{pmatrix}\begin{pmatrix}u_{\overline S}\\u_S\end{pmatrix}=\lambda_0\begin{pmatrix}u_{\overline S}\\u_S\end{pmatrix}.$$
An easy computation shows that $$(M_{\overline S\overline S}(\lambda_0)-\lambda_0 I)u_{\overline S}+M_{\overline SS}(\lambda_0)u_S=0,$$
$$M_{S\overline S}(\lambda_0)u_{\overline S}+(M_{SS}(\lambda_0)-\lambda_0 I)u_S=0.$$
Assume that the matrix $(M_{\overline S\overline S}(\lambda_0)-\lambda_0 I)$ is invertible. Then the first row gives 
\begin{equation}\label{us}
u_{\overline S}=-(M_{\overline S\overline S}(\lambda_0)-\lambda_0 I)^{-1}M_{\overline SS}(\lambda_0)u_S.
\end{equation}
By plugging this relation into the second row, we get
$$-M_{S\overline S}(\lambda_0)(M_{\overline S\overline S}(\lambda_0)-\lambda_0 I)^{-1}M_{\overline SS}(\lambda_0)u_S+M_{SS}(\lambda_0)u_S=\lambda_0 u_S.$$

Observe that the left side of this equation is $R_S(\lambda_0)u_S$, where $R_S(\lambda_0)$ is the isospectral matrix reduction evaluated at $\lambda_0$. Therefore $R(\lambda_0)u_S=\lambda_0 u_S$, i.e. projections of eigenvectors of the original (adjacency) matrix (of a network) are indeed eigenvectors with the same eigenvalues of the isospectrally reduced (adjacency) matrix. Thus, the property of eigenvector preservation for isospectral reductions is proved.

This is a much shorter proof than the one  in \cite{torres14}. Moreover, it clarifies a general structure of the isospectral reduction procedure .

Now let us turn to generalized eigenvectors. In addition to $M(\lambda_0)u=\lambda_0 u$, we have $(M(\lambda_0)-\lambda_0 I)v=u$, i.e.
$$\begin{pmatrix}M_{\overline S\overline S}(\lambda_0)-\lambda_0 I & M_{\overline SS}(\lambda_0)\\M_{S\overline S}(\lambda_0) & M_{SS}(\lambda_0)-\lambda_0 I\end{pmatrix}\begin{pmatrix}v_{\overline S}\\v_S\end{pmatrix}=\begin{pmatrix}u_{\overline S}\\u_S\end{pmatrix}.$$

A simple computation gives $$(M_{\overline S\overline S}(\lambda_0)-\lambda_0 I)v_{\overline S}+M_{\overline SS}(\lambda_0)v_S=u_{\overline S},$$
$$M_{S\overline S}(\lambda_0)v_{\overline S}+(M_{SS}(\lambda_0)-\lambda_0 I)v_S=u_S.$$

Assume that the matrix $(M_{\overline S\overline S}(\lambda_0)-\lambda_0 I)$ is invertible. Then we get from the first row 
$$v_{\overline S}=(M_{\overline S\overline S}(\lambda_0)-\lambda_0 I)^{-1}u_{\overline S}-(M_{\overline S\overline S}(\lambda_0)-\lambda_0 I)^{-1}M_{\overline SS}(\lambda_0)v_S.$$

Plugging this into the second row gives 
$$M_{S\overline S}(\lambda_0)\{[M_{\overline S\overline S}(\lambda_0)-\lambda_0 I]^{-1}u_{\overline S}-[M_{\overline S\overline S}(\lambda_0)-\lambda_0 I]^{-1}M_{\overline SS}(\lambda_0)v_S\}+(M_{SS}(\lambda_0)-\lambda_0 I)v_S=u_S,$$
$$[R_S(\lambda_0)-\lambda_0 I]v_S+M_{S\overline S}(\lambda_0)[M_{\overline S\overline S}(\lambda_0)-\lambda_0 I]^{-1}u_{\overline S}=u_S.$$

It is easy to see that $v_S$ is a generalized eigenvector for $R_S(\lambda_0)$ iff
\begin{equation}\label{blockgen}
M_{S\overline S}(\lambda_0)(M_{\overline S\overline S}(\lambda_0)-\lambda_0 I)^{-1}u_{\overline S}=cu_S.
\end{equation}
One necessary condition is that $u_S$ is in the column space of $M_{S\overline S}(\lambda_0)$. In the case when $\overline S$ has only one single node, equation \eqref{blockgen} agrees with Theorem \ref{1p}, and the necessary condition that $u_S$ is in the column space of $M_{S\overline S}(\lambda_0)$ is also sufficient.

Observe that  we have not used the relation between $u_{\overline S}$ and $u_S$. Therefore generalized eigenvectors of higher ranks are preserved iff they also satisfy \eqref{blockgen}, with $u$ being a generalized eigenvector instead of the eigenvector.

On the other hand, by plugging in \eqref{us}, the relation between $u_S$ and $u_{\overline S}$, we get $$M_{S\overline S}(\lambda_0)(M_{\overline S\overline S}(\lambda_0)-\lambda_0 I)^{-2}M_{\overline SS}(\lambda_0)u_S=-cu_S.$$
Therefore the generalized eigenvector $v$ is preserved iff $u_S$ is an eigenvector of $M_{S\overline S}(\lambda_0)(M_{\overline S\overline S}(\lambda_0)-\lambda_0 I)^{-2}M_{\overline SS}(\lambda_0)$.
\begin{theorem}\label{goback}
All eigenvectors of the reduced matrix $R_S(\lambda)$ are restrictions of the eigenvectors of the original matrix $M$. The projection of eigenvectors of $M$ onto the eigenvectors of $R_S(\lambda)$ corresponding to the same eigenvalue is a bijection.
\end{theorem}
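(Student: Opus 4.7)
The plan is to establish both assertions simultaneously by constructing an explicit inverse to the projection $\pi: u \mapsto u_S$, reversing the block-matrix argument already given in the excerpt for eigenvector preservation. Fix an eigenvalue $\lambda_0$ (so $M_{\overline{S}\overline{S}}(\lambda_0) - \lambda_0 I$ is invertible, as required for $R_S$ to be defined at $\lambda_0$). I would show that (i) $\pi$ sends the $\lambda_0$-eigenspace of $M$ into the $\lambda_0$-eigenspace of $R_S(\lambda_0)$ (already proved), (ii) $\pi$ is injective on that eigenspace, and (iii) $\pi$ is surjective onto it.

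For surjectivity, which is exactly the first sentence of the theorem, I would take any nonzero $w_S$ with $R_S(\lambda_0)w_S = \lambda_0 w_S$ and define
\[
w_{\overline{S}} := -(M_{\overline{S}\overline{S}}(\lambda_0) - \lambda_0 I)^{-1} M_{\overline{S}S}(\lambda_0) w_S,
\]
then set $w = (w_{\overline{S}}, w_S)$. Applying $M(\lambda_0) - \lambda_0 I$ in block form to $w$, the $\overline{S}$-row collapses to zero by the very definition of $w_{\overline{S}}$, and the $S$-row, after substituting that definition and recognising the Schur complement, reduces to $(R_S(\lambda_0) - \lambda_0 I)w_S = 0$. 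Since $w_S \neq 0$ automatically forces $w \neq 0$, the vector $w$ is a genuine eigenvector of $M$ for $\lambda_0$ with $\pi(w) = w_S$.

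For injectivity, I would invoke relation \eqref{us}: any eigenvector $u$ of $M$ for $\lambda_0$ must satisfy $u_{\overline{S}} = -(M_{\overline{S}\overline{S}}(\lambda_0) - \lambda_0 I)^{-1} M_{\overline{S}S}(\lambda_0) u_S$, so $u_{\overline{S}}$ is determined entirely by $u_S$. Hence $u_S = 0 \Rightarrow u = 0$, and $\pi$ has trivial kernel on the $\lambda_0$-eigenspace of $M$. Combined with surjectivity, $\pi$ restricts to a linear isomorphism between the $\lambda_0$-eigenspaces of $M$ and of $R_S(\lambda_0)$; removing the zero vector yields the claimed bijection between eigenvectors for the same eigenvalue.

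I do not expect any real obstacle: the whole argument is the block-matrix computation of the preceding pages run backwards, with the extension formula above playing the role of an inverse to $\pi$. The only point that needs a brief mention is the invertibility of $M_{\overline{S}\overline{S}}(\lambda_0) - \lambda_0 I$ at the specific value $\lambda_0$ under consideration, which is furnished by the $\lambda_0$-structural-set hypothesis in the graph setting (and by the standing assumption in the more general matrix-reduction setting); no additional spectral input is required.
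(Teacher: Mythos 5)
Your proposal is correct and follows essentially the same route as the paper: the same lift $w_{\overline S}=-(M_{\overline S\overline S}(\lambda_0)-\lambda_0 I)^{-1}M_{\overline SS}(\lambda_0)w_S$ for surjectivity, and the same use of relation \eqref{us} (that $u_{\overline S}$ is determined by $u_S$) for injectivity. The only cosmetic difference is that you phrase injectivity as triviality of the kernel of the projection, while the paper shows directly that two eigenvectors with the same $S$-restriction coincide.
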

\begin{proof}
Suppose $R(\lambda_0)u=\lambda_0 u$. Then 
$$\{M_{SS}(\lambda_0)-M_{S\overline{S}}(\lambda_0)[M_{\overline{S}\overline{S}}(\lambda_0)-\lambda_0 I]^{-1}M_{\overline SS}(\lambda_0)\}u=\lambda_0 u.$$
Let $v=-[M_{\overline S\overline S}(\lambda_0)-\lambda_0 I]^{-1}M_{\overline SS}(\lambda_0)u$. Then we have
\begin{align*}
M(\lambda_0)\begin{pmatrix}v\\u\end{pmatrix}=\begin{pmatrix}
M_{\overline S\overline S}(\lambda_0) & M_{\overline SS}(\lambda_0)\\
M_{S\overline S}(\lambda_0) & M_{SS}(\lambda_0)
\end{pmatrix}\begin{pmatrix}v\\u\end{pmatrix}=\lambda_0\begin{pmatrix}v\\u\end{pmatrix}.
\end{align*}
which proves that the projection is surjective.

Suppose now that $(v,u)^T$ and $(u_{\overline S},u)^T$ are both eigenvectors of $M$ for eigenvalue $\lambda_0$. Then by \eqref{us} we have $v=u_{\overline S}=-(M_{\overline S\overline S}-\lambda_0 I)^{-1}M_{\overline SS}u$. So the projection is also injective.
\end{proof}

Proof of the following statement can be found in \cite{webb14} (corollary 1.1).

\begin{lemma}\label{algebraic} 
For a matrix $M\in\C^{n\times n}$, let $R$ be the isospectral reduction of $M$ with respect to a structural set $S\subset\{1,\dots, n\}$. Then $\sigma(R)=\sigma(M)-\sigma(M_{\overline S\overline S})$.
\end{lemma}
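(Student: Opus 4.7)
The plan is to derive the statement from the Schur complement identity applied to $M-\lambda I$. First I would write
\[
M - \lambda I = \begin{pmatrix} M_{\overline S \overline S} - \lambda I & M_{\overline S S} \\ M_{S \overline S} & M_{SS} - \lambda I \end{pmatrix},
\]
and note that $M_{\overline S \overline S} - \lambda I$ is invertible off the finite set $\sigma(M_{\overline S \overline S})$. For such $\lambda$ the classical block determinant formula gives
\[
\det(M-\lambda I) \;=\; \det(M_{\overline S\overline S}-\lambda I)\cdot\det(R_S(\lambda)-\lambda I),
\]
which I would rewrite as the rational function identity $\det(R_S(\lambda)-\lambda I) = P(\lambda)/Q(\lambda)$, where $P(\lambda)=\det(M-\lambda I)$ and $Q(\lambda)=\det(M_{\overline S\overline S}-\lambda I)$ are ordinary polynomials of degrees $n$ and $|\overline S|$ respectively. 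Identity of two rational functions on all but finitely many $\lambda$ forces equality everywhere they are defined.

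Next I would pass to lowest terms. Setting $g=\gcd(P,Q)$ and writing $P=g\tilde P$, $Q=g\tilde Q$ with $\gcd(\tilde P,\tilde Q)=1$, we obtain $\det(R_S(\lambda)-\lambda I)=\tilde P(\lambda)/\tilde Q(\lambda)$ in reduced form. By the conventions for $\W$ and for the spectrum laid out in Section~\ref{graph-red}, $\sigma(R_S)$ is precisely the multiset of roots of the numerator $\tilde P$.

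The final step is to compare multiplicities at each $\alpha\in\C$. Let $m_P,m_Q,m_{\tilde P},m_{\tilde Q},m_g$ denote the multiplicity of $\alpha$ as a root of the corresponding polynomial. From $P=g\tilde P$ and $Q=g\tilde Q$ we get $m_P=m_g+m_{\tilde P}$ and $m_Q=m_g+m_{\tilde Q}$; coprimeness of $\tilde P$ and $\tilde Q$ forces $\min(m_{\tilde P},m_{\tilde Q})=0$. A short case split (on whether $m_{\tilde Q}(\alpha)>0$) then yields $\max(m_P-m_Q,0)=m_{\tilde P}$ for every $\alpha$, which is exactly the statement $\sigma(R_S)=\sigma(M)-\sigma(M_{\overline S\overline S})$ under the multiset subtraction defined in Section~\ref{graph-red}.

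I expect the main obstacle to be not the algebra, which is routine once the Schur complement is invoked, but the careful bookkeeping around the multiset conventions. One must justify that $\sigma(R_S)$, defined via zeros of the rational function $\det(R_S(\lambda)-\lambda I)$, really coincides with the roots of the reduced numerator $\tilde P$, and that every cancellation between $P$ and $Q$ is precisely what gets removed by $\sigma(M)-\sigma(M_{\overline S\overline S})$ rather than contributing spurious eigenvalues (or hiding poles of $R_S(\lambda)$ inside $\sigma(M_{\overline S\overline S})$).
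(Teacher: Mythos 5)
Your proof is correct. Note that the paper itself does not prove Lemma~\ref{algebraic} at all --- it only cites \cite{webb14} (Corollary 1.1) --- and the Schur-complement determinant identity $\det(M-\lambda I)=\det(M_{\overline S\overline S}-\lambda I)\det(R_S(\lambda)-\lambda I)$ is exactly the standard route taken there. Your bookkeeping is also right: with $P=g\tilde P$, $Q=g\tilde Q$, $g=\gcd(P,Q)$, one gets $m_{\tilde P}=m_P-\min(m_P,m_Q)=\max(m_P-m_Q,0)$, which matches the paper's multiset-difference convention, and coprimality of $\tilde P$ and $\tilde Q$ guarantees that no root of the reduced numerator is hidden by a pole. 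The only point worth making explicit is the one you already flag: that $\sigma(R_S)$ for a matrix over $\W$ is read off from the numerator of $\det(R_S(\lambda)-\lambda I)$ \emph{in lowest terms}, which is indeed the convention of \cite{webb14} and is consistent with how $\W$ is defined in Section~\ref{graph-red}.
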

Hence for a given eigenvalue $\lambda_0\in\sigma(M)$, if $\lambda_0\not\in\sigma(M_{\overline S\overline S})$, then the algebraic multiplicity of $\lambda_0$ as an eigenvalue won't change after isospectral reduction of {M} to {S}. Therefore if we reduce over a $\lambda_0-$structural set, then the algebraic multiplicity of $\lambda_0$ will be preserved.

\begin{theorem}\label{mlt}
For a matrix $M\in\C^{n\times n}$, isospectral reductions preserve both the algebraic and the geometric multiplicities of any eigenvalue.
\end{theorem}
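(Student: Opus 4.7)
The plan is to derive both halves of Theorem \ref{mlt} from the machinery already built in Sections 3 and 4: the algebraic half is essentially a packaging of Lemma \ref{algebraic}, while the geometric half reduces to upgrading the set-theoretic bijection of Theorem \ref{goback} to a \emph{linear} isomorphism of eigenspaces.

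For the algebraic multiplicity, I would first note that in order to evaluate $R_S(\lambda)$ at $\lambda_0$ the matrix $M_{\overline{S}\overline{S}}(\lambda_0) - \lambda_0 I$ must be invertible, i.e.\ $\lambda_0 \notin \sigma(M_{\overline{S}\overline{S}})$. Lemma \ref{algebraic} then gives
$$\sigma(R_S) = \sigma(M) - \sigma(M_{\overline{S}\overline{S}})$$
as multisets. Since $\lambda_0$ has multiplicity zero in the subtrahend, its multiplicity in $\sigma(R_S)$ equals its multiplicity in $\sigma(M)$; this is exactly the remark preceding the theorem and yields the algebraic statement.

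For the geometric multiplicity, let $E_{\lambda_0}(M)$ and $E_{\lambda_0}(R_S)$ denote the respective $\lambda_0$-eigenspaces. I would consider the projection
$$\pi : E_{\lambda_0}(M) \longrightarrow E_{\lambda_0}(R_S), \qquad \begin{pmatrix}u_{\overline{S}} \\ u_S\end{pmatrix} \mapsto u_S,$$
which is manifestly linear and lands in $E_{\lambda_0}(R_S)$ by the preservation-of-eigenvectors computation performed at the start of Section 4. Theorem \ref{goback} then supplies the bijectivity: surjectivity comes from lifting any $u \in E_{\lambda_0}(R_S)$ to $(-(M_{\overline{S}\overline{S}}(\lambda_0) - \lambda_0 I)^{-1} M_{\overline{S} S}(\lambda_0)\,u,\; u)^{T}$, a construction which is itself linear in $u$, and injectivity follows from the uniqueness formula \eqref{us} expressing $u_{\overline{S}}$ linearly in terms of $u_S$. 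Hence $\pi$ is a linear bijection between finite-dimensional spaces, so $\dim E_{\lambda_0}(M) = \dim E_{\lambda_0}(R_S)$.

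The main (and quite mild) obstacle is being explicit that the correspondence in Theorem \ref{goback} is \emph{linear} rather than merely a bijection of sets, but this is transparent from the closed-form expressions for $\pi$ and its inverse. No new calculation beyond Lemma \ref{algebraic} and Theorem \ref{goback} is required.
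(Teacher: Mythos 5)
Your proof is correct and follows essentially the same route as the paper: the algebraic half via Lemma \ref{algebraic} together with the observation that $\lambda_0\notin\sigma(M_{\overline S\overline S})$ whenever the reduction exists at $\lambda_0$, and the geometric half via the bijection of Theorem \ref{goback}. Your one addition --- making explicit that the projection and its inverse are \emph{linear}, so the bijection is an isomorphism of eigenspaces and hence preserves dimension --- is a worthwhile precision that the paper's proof leaves implicit, since a mere set-theoretic bijection would not determine dimension.
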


\begin{proof}
Let $\lambda_0$ be an eigenvalue of the reduced matrix $R(\lambda)$. Lemma \ref{algebraic} ensures that if we pick a $\lambda_0-$structural set the algebraic multiplicity of $\lambda_0$ will be preserved. In fact, lemma \ref{algebraic} is true as long as the reduction exists at $\lambda_0$ \cite{webb14}, i.e. the matrix $M_{\overline S\overline S}-\lambda_0I$ is invertible.

Because of the bijection between the eigenvectors before and after isospectral reduction, the geometric multiplicity of an engenvalue is also preserved.
\end{proof}

Note though that Theorem \ref{mlt} gives no information about the generalized eigenvectors. Unlike the bijective projection we have for eigenvectors, there are situations when the reduced matrix doesn't have a generalized eigenvector for the eigenvalue $\lambda_0$ although the original (nonreduced) matrix does.

The projection of the generalized eigenvector of the original matrix to its components corresponding to vertices contained in the structural set $S$ is a generalized eigenvector for the reduced matrix if and only if \eqref{blockgen} holds.

\begin{rk}
Observe that the proof of the existence of bijection between the eigenvectors, i.e. Theorem \ref{goback}, doesn't require $M$ to have complex entries. In particular, it means that $M$ could be a matrix with entries which are rational functions of $\lambda$, which are used in isospectral reductions of networks \cite{webb14}. Consequently, the geometric multiplicity of a specific eigenvalue is preserved throughout the entire sequence of isospectral reductions.

Moreover, if the original matrix has entries which are complex numbers, then the algebraic multiplicity of a specific eigenvalue is also preserved throughout the entire sequence of isospectral reductions. By the uniqueness of sequential reductions \cite{webb14} the isospectral reduction to a specific structural set is the same as the one which results in reduction to the same set $S$ via several consecutive isospectral reductions. The algebraic multiplicity of eigenvalue $\lambda_0$ at each step is equal to the algebraic multiplicity of $\lambda_0$ for the original matrix.
\end{rk}


\section{An Example of Isospectral Reductions over Different Structural sets}
The results obtained in the previous sections demonstrate that a generalized eigenvector may or may not be preserved under isospectral reductions of matrices and networks.  
In this section we consider isospectral reduction of the simple small network depicted in figure \ref{4node}. This will illustrates the different possibilities which arise after picking different structural sets. The details of the corresponding computations are presented in the Appendix. 

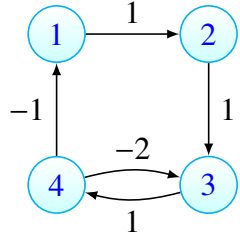
\begin {figure}[h]
\begin {tikzpicture}[-latex ,auto ,node distance =2 cm and 2cm ,on grid ,
semithick ,
state/.style ={ circle ,top color =white , bottom color = processblue!20 ,
draw,processblue , text=blue , minimum width =0.5 cm}]
\node[state] (A)
{$1$};
\node[state] (B) [right=of A] {$2$};
\node[state] (C) [below =of B] {$3$};
\node[state] (D) [below =of A] {$4$};
\path (A) edge [] node[above] {$1$} (B);
\path (B) edge [] node[right] {$1$} (C);
\path (C) edge [bend left =15] node[below] {$1$} (D);
\path (D) edge [bend left =15] node[above] {$-2$} (C);
\path (D) edge [] node[left]{$-1$} (A);
\end{tikzpicture}
\caption{Original network}\label{4node}
\end{figure}

The adjacency matrix of this network is $A=\begin{pmatrix}0&1&0&0\\0&0&1&0\\0&0&0&1\\-1&0&-2&0\end{pmatrix}$. It has eigenvalues $\{i,i,-i,-i\}$. The generalized eigenvector chain for the eigenvalue $i$ is $v^i=\begin{pmatrix}-3\\-2i\\1\\0\end{pmatrix}\to u^i=\begin{pmatrix}i\\-1\\-i\\1\end{pmatrix}$; for the eigenvalue  $-i$ the corresponding chain is $v^{-i}=\begin{pmatrix}2i\\1\\0\\1\end{pmatrix}\to u^{-i}=\begin{pmatrix}-1\\i\\1\\-i\end{pmatrix}$.

This network contains two cycles  $(1234)$ and $(34)$. All the  structural sets of size two for this network  are $S=\{1,4\},\{2,4\},\{3,4\},\{1,3\},\{2,3\}$. The list of all size 3 structural sets is $S=\{1,2,4\},\{1,3,4\},\{2,3,4\},\{1,2,3\}$.

For the size 3 structural sets, since $\overline S$ has only a single node, Theorem \ref{1p} is applicable.

If $S=\{1,2,4\},\overline S=\{3\}$, then $A_{S\overline S}=\begin{pmatrix}0\\1\\-2\end{pmatrix},u^i_S=\begin{pmatrix}i\\-1\\1\end{pmatrix},u^{-i}_S=\begin{pmatrix}-1\\i\\-i\end{pmatrix}$.Thus  $A_{S\overline S}\nparallel u^i_S$, and $A_{S\overline S}\nparallel u^{-i}_S$. $v^i_S,v^{-i}_S$ are not generalized eigenvectors for $R_S(\lambda)$.

To be more precise,
\begin{align*}
R_S(\lambda)=\begin{pmatrix} 0 & 1 & 0 \\ 0 & 0 & 1/\lambda \\ -1 & 0 & -2/\lambda \end{pmatrix},\det(R_S(\lambda)-\lambda I)=-\frac{(\lambda^2+1)^2}{\lambda}, \sigma(R_S(\lambda))=\{i,i,-i,-i\}.\\
R_S(i)=\begin{pmatrix} 0 & 1 & 0 \\ 0 & 0 & -i \\ -1 & 0 & 2i \end{pmatrix},\det(R_S(i)-\lambda I)=-(\lambda-i)(\lambda^2-i\lambda+1),\sigma(R_S(i))=\{i,\frac{1+\sqrt5}{2}i,\frac{1-\sqrt5}{2}i\}.
\end{align*}
The complex number $i$ is an eigenvalue for both $R_S(\lambda)$ and $R_S(i)$; the algebraic multiplicity of $i$ for $R_S(\lambda)$ is 2; for $R_S(i)$ it is 1. $R_S(i)$ doesn't have a generalized eigenvector for $i$. It has just one eigenvector corresponding to this eigenvalue. Therefore the generalized eigenvector is lost after isospectral reduction of the matrix.

One can check that isospectral reduction to any other size 3 structural set does not preserve the generalized eigenvectors either.

\begin {figure}
\begin {tikzpicture}[-latex ,auto ,node distance =2 cm and 2cm ,on grid ,
semithick ,
state/.style ={ circle ,top color =white , bottom color = processblue!20 ,
draw,processblue , text=blue , minimum width =0.5 cm}]
\node[draw] at (2,-1) {first reduction};
\node[state] at (0,0) (A){$1$};
\node[state] (C)[above right=of A]{$4$};
\node[state] (B) [below right=of C] {$2$};
\path (A) edge [] node[above] {$1$} (B);
\path (B) edge [] node[right] {$1/\lambda$} (C);
\path (C) edge [] node[below] {$-1$} (A);
\path (C) edge [loop above] node[above] {$-2/\lambda$} (C);
\draw[->,line width=1pt](5,1) to (7,1);
\node[state] at (8.5,1) (D){$1$};
\node[state] at (11,1) (E){$4$};
\path (D) edge [bend left] node [above] {$1/\lambda^2$} (E);
\path (E) edge [bend left] node [below] {$-1$} (D);
\path (E) edge [loop right] node [right] {$-2/\lambda$} (E);
\node[draw] at (10,-1) {second reduction};
\end{tikzpicture}
\caption{Isospectral reductions of the original network}\label{3node}
\end{figure}
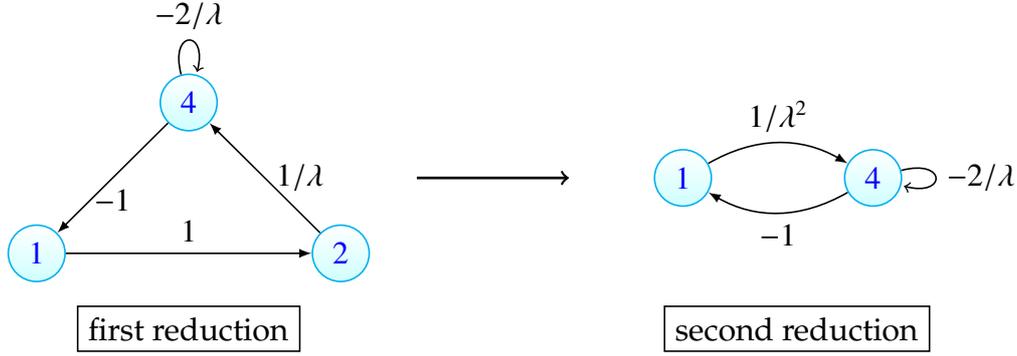

Now let us further reduce the network to $S'=\{1,4\}\subset S=\{1,2,4\}$. We  have
\begin{align*}
R_{S'}(\lambda)=\begin{pmatrix}0 & 1/\lambda^2 \\ -1 & -2/\lambda\end{pmatrix}, \det(R_{S'}(\lambda)-\lambda I)=\frac{(\lambda^2+1)^2}{\lambda^2}, \sigma(R_{S'}(\lambda))=\{i,i,-i,-i\};\\
R_{S'}(i)=\begin{pmatrix}0 & -1 \\ -1 & 2i\end{pmatrix}, \det(R_{S'}(i)-\lambda I)=(\lambda-i)^2,\sigma(R_{S'}(i))=\{i,i\};
\end{align*}

Here the algebraic multiplicity of $i$ as an eigenvalue is the same for $R_{S'}(\lambda)$ and $R_{S'}(i)$. We know that the eigenvectors are always preserved because of the bijective projection. Therefore after the second reduction we gained the generalized eigenvector back. This is a quite unexpected result which raises a question about the conditions on a structural set which allow for preservation of generalized eigenvectors.

Of course we can directly reduce the original network to $S'=\{1,4\}$ too. One can check the reduction satisfies both the entry-wise formula \eqref{gen-eig} and the block-wise formula \eqref{blockgen}. Furthermore, for all the size 2 structural sets, the reduction preserves both generalized eigenvectors ($v^i,v^{-i}$) except for the  structural set $\{3,4\}$. Observe that it is the only structural set of size two which contains a complete cycle of our network.

\begin{rk}
Let a matrix $M\in\W^{n\times n},\lambda_0\in\sigma(M)$. Define $a(\lambda_0,M)$ and $g(\lambda_0,M)$ to be the algebraic and geometric multiplicities of $\lambda_0$. Then for $M\in\C^{n\times n}$, the number of linearly independent generalized eigenvectors corresponding to $\lambda_0$ for $M$ is $d(\lambda_0,M)=a(\lambda_0,M)-g(\lambda_0,M)$.

Consider now $R(\lambda)\in\W^{n\times n}$. By definition the eigenvectors satisfy $R(\lambda_0)u=\lambda_0u$; and the generalized eigenvectors satisfy $(R(\lambda_0)-\lambda_0I)^kv=0$. Thus $g(\lambda_0,R(\lambda))=g(\lambda_0,R(\lambda_0))$, $d(\lambda_0,R(\lambda))=d(\lambda_0,R(\lambda_0))$. Observe now that $R(\lambda_0)\in\C^{n\times n}$. $d(\lambda_0,R(\lambda_0))=a(\lambda_0,R(\lambda_0))-g(\lambda_0,R(\lambda_0))$. As seen in the previous example, $a(\lambda_0,R(\lambda_0))\neq a(\lambda_0,R(\lambda))$. Hence
$$d(\lambda_0,R(\lambda))=a(\lambda_0,R(\lambda_0))-g(\lambda_0,R(\lambda_0))\neq a(\lambda_0,R(\lambda))-g(\lambda_0,R(\lambda)).$$
Therefore the number of linearly independent generalized eigenvectors is not equal to the difference between the algebraic and geometric multiplicities of the eigenvalue. In other words, the notion of generalized eigenvectors does not make much sense for matrices with rational functions as entries.
\end{rk}

\begin{rk}
Another fact worth noticing is that the reductions shown in this example form a sequence of isospectral reductions, i.e. $R_{S'}$ is an isospectral reduction of $R_S$. With the uniqueness of sequential reductions \cite{webb14}, one is tempted to believe that a property that's true for the final step of a sequence of reductions should be true in each and every step through the sequence of reductions. In our case, for the preservation of generalized eigenvectors there is no sequential property. Indeed although the generalized eigenvectors are lost in $R_S$, they managed to "come back" in $R_{S'}$. One might ask then under which conditions the generalized eigenvectors can be recovered. 

Consider the sequence of reductions that starts from $R_S$, instead of $A$. After one reduction to $R_{S'}$, instead of "recovering" generalized eigenvectors, the reduction generated generalized eigenvectors that $R_S$ doesn't have. This again, is caused by the fact that the concept of generalized eigenvectors does not actually apply to matrices whose entries are rational functions. Through a sequence of reductions, the generalized eigenvectors can be lost or "recovered", or even generated, at each step. We can not say what is going to happen in the next step in a sequence of isospectral reductions even with a knowledge of all the previous steps. Instead, one must directly analyze each new step in the sequence of reductions. 
\end{rk}


\section{Some Sufficient conditions for preservation of generalized eigenvectors}

\begin{theorem}
The generalized eigenvectors are preserved if either of the following conditions hold
: (i) $M_{S\overline S}(\lambda_0)=0$; (ii) $M_{\overline S S}(\lambda_0)=0$.
\end{theorem}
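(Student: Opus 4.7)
The plan is to verify the block-form preservation criterion \eqref{blockgen} directly in each case, and in fact to show that the scalar $c$ may be taken equal to $0$, which automatically satisfies the nondegeneracy requirement $c\neq -1$ of Theorem~\ref{entry-wise}. Since $S$ is a $\lambda_0$-structural set, the matrix $M_{\overline S\overline S}(\lambda_0)-\lambda_0 I$ is invertible, and the criterion \eqref{blockgen} asserts that the projection $v_S$ is a generalized eigenvector of $R_S(\lambda_0)$ if and only if
\[
M_{S\overline S}(\lambda_0)\bigl(M_{\overline S\overline S}(\lambda_0)-\lambda_0 I\bigr)^{-1}u_{\overline S}=c\,u_S
\]
for some $c\in\C$.

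For case (i), the hypothesis $M_{S\overline S}(\lambda_0)=0$ forces the left-hand side of the above identity to vanish for every $u_{\overline S}$. Hence the criterion is satisfied with $c=0$ and $v$ is preserved.

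For case (ii), I would invoke relation \eqref{us}, derived from the eigenvalue equation on the $\overline S$ block, which reads $u_{\overline S}=-(M_{\overline S\overline S}(\lambda_0)-\lambda_0 I)^{-1}M_{\overline S S}(\lambda_0)u_S$. Under the hypothesis $M_{\overline S S}(\lambda_0)=0$ this immediately yields $u_{\overline S}=0$. The left-hand side of \eqref{blockgen} is therefore again zero and the criterion is satisfied with $c=0$.

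The argument is genuinely a short direct consequence of the iff-criterion \eqref{blockgen} already established for generalized eigenvectors, so there is no substantive obstacle to overcome. The conceptual content is that vanishing of either off-diagonal block at $\lambda_0$ decouples the propagation of the rank-$2$ generalized-eigenvector equation between $S$ and $\overline S$ (in opposite directions in the two cases), making the compatibility relation \eqref{blockgen} trivially hold.
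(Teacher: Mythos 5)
Your proposal is correct and follows essentially the same route as the paper's own proof: case (i) makes the left-hand side of \eqref{blockgen} vanish identically, and case (ii) uses \eqref{us} to deduce $u_{\overline S}=0$, so in both cases \eqref{blockgen} holds with $c=0$. Your explicit observation that $c=0\neq-1$ is a small but worthwhile addition the paper leaves implicit.
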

\begin{proof}
If $M_{S\overline S}(\lambda_0)=0$, plugging in \eqref{blockgen} we have $M_{S\overline S}(\lambda_0)(M_{\overline S\overline S}(\lambda_0)-\lambda_0I)^{-1}u_{\overline S}=0=0u_S$.

If $M_{\overline S S}(\lambda_0)=0$, then by \eqref{us} we have $u_{\overline S}=-(M_{\overline S\overline S}(\lambda_0)-\lambda_0I)^{-1}M_{\overline S S}(\lambda_0)u_S=0$. If $u_{\overline S}=0$, then \eqref{blockgen} is true.
\end{proof}
For a network, the relation $M_{S\overline S}(\lambda_0)=0$ means that no edges go from $S$ to $\overline S$; while $M_{\overline SS}(\lambda_0)=0$ means there is no edge from $\overline S$ to $S$. In either case, we have $R(\lambda_0)=M_{SS}(\lambda_0)$.

\section{Reconstruction of the original network}

In this section we address the problem of reconstructing the original network or matrix from its isospectral reduction.

The eigenvectors for eigenvalue $\lambda_0$ can be reconstructed, as shown in \cite{torres14}. And we can reconstruct the generalized eigenvectors for $\lambda_0$ similarly.

\begin{df}
(Depth of a vertex) The depth of a vertex $i\in V$ is defined recursively as follows.

(1) A vertex $i\in S$ has depth 0.

(2) A vertex $i\in\overline S$ has depth $k$ iff $i$ has no depth less than $k$, and $(i,j)\in E$ implies $j$ has depth $<k$, for all $j\in V$.\\
We denote by $S_k$ the set of all vertices of depth $\le k$. Because $S$ is a structural set, every vertex $i$ has a finite depth. We set the depth of $(G,S)$ to be the maximum depth of a vertex.
\end{df}
\begin{prop}\label{rec}
If $u_S=(u_i^S)_{i\in S},v_S=(v_i^S)$ are the eigenvector and rank 2 generalized eigenvector of $R_S(G,\lambda_0)$, and $R_S(G,\lambda_0)v_S-\lambda_0v_S=(1+c)u_S$, where $c\neq-1$ is a complex number, then the recursive relations
\begin{equation}\label{reconstruction}
\begin{cases}
v_i=v_i^S\qquad\text{ for }i\in S_0=S\\
\displaystyle v_l+\frac{u_l}{\lambda_0-\omega(l,l)}=\frac{1}{\lambda_0-\omega(l,l)}\sum_{j\in S_{k-1}}\omega(l,j)v_j\qquad\text{ for all }l\in S_k\setminus S_{k-1}
\end{cases}
\end{equation}
determine the rank 2 generalized eigenvector $v$ for $M_G$ associated to $\lambda_0$.
\end{prop}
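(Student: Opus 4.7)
The plan is to identify the recursion \eqref{reconstruction} as nothing more than the $\overline S$-rows of the generalized eigenvector equation $M_G v - \lambda_0 v = u$, reorganized by depth so that each formula uses only values produced at earlier iterations. Once that identification is made, the proposition reduces to a base case plus induction on depth, with consistency of the $S$-rows handled by the block-matrix identity already derived in Section 4.

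First, I would start from the row-$l$ equation $\sum_{j\in V}\omega(l,j)v_j - \lambda_0 v_l = u_l$ for $l \in \overline S$, split off the diagonal term, and rearrange to obtain
\[
v_l + \frac{u_l}{\lambda_0 - \omega(l,l)} \;=\; \frac{1}{\lambda_0 - \omega(l,l)}\sum_{j \neq l}\omega(l,j)\, v_j,
\]
where the division is legitimate because $S$ is a $\lambda_0$-structural set and hence $\omega(l,l) \neq \lambda_0$. If $l$ has depth $k$, the definition of depth forces every out-edge $(l,j)\in E$ to satisfy $j \in S_{k-1}$; so $\omega(l,j) = 0$ whenever $j \notin S_{k-1}$, and the sum collapses to $\sum_{j \in S_{k-1}}\omega(l,j)v_j$. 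This is exactly the second line of \eqref{reconstruction}.

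Next I would run the induction on $k$. The base case $k = 0$ is the initial data $v_i = v_i^S$ for $i \in S$. In the inductive step, the right-hand side at any $l \in S_k \setminus S_{k-1}$ only involves entries $v_j$ with $j \in S_{k-1}$, already computed, and the scalar $u_l$, itself reconstructible from $u_S$ by the analogous depth-ordered formula from \cite{torres14}. Because $S$ is a structural set, every vertex has finite depth, so the procedure terminates and produces a uniquely defined vector $v$.

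The main subtlety I expect to flag is the consistency between the hypothesis $R_S(G,\lambda_0)v_S - \lambda_0 v_S = (1+c)u_S$ and the $S$-rows of $M_G v - \lambda_0 v = u$. After elimination of $v_{\overline S}$ via the recursion, those $S$-rows collapse, by the block-matrix calculation already carried out in Section 4, to $(R_S - \lambda_0 I)v_S + M_{S\overline S}(\lambda_0)(M_{\overline S \overline S}(\lambda_0) - \lambda_0 I)^{-1}u_{\overline S} = u_S$, which is compatible with the hypothesis exactly when the preservation condition \eqref{blockgen} holds, i.e.\ in the regime this section operates in. The condition $c \neq -1$ then guarantees that $v$ is genuinely rank $2$ rather than a plain eigenvector. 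So the only new content of the proposition is the depth-ordered formula for $v_{\overline S}$, which the derivation above establishes.
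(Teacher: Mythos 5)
Your proposal is correct and follows essentially the same route as the paper, whose justification amounts to observing that \eqref{reconstruction} is the depth-ordered rearrangement of the $\overline S$-rows \eqref{vl} of $M_G(\lambda_0)v-\lambda_0v=u$, solved recursively outward from $S_0=S$. Your extra verification that the $S$-rows remain consistent with the hypothesis via the block identity of Section~4 is a point the paper leaves implicit, but it does not change the argument.
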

\begin{rk}
The relation \eqref{reconstruction} comes from equation \eqref{vl}. And this reconstruction formula can reconstruct higher ranking generalized eigenvectors as well. We just need to replace $u$ with the rank $k-1$ generalized eigenvector and $v$ with the rank $k$ generalized eigenvector.
\end{rk}
\begin{rk}
Proposition \ref{rec} is true for any $M_G\in\W^{n\times n}$. For a matrix $M\in\C^{n\times n}$, if all its eigenvalues, eigenvectors and chains of generalized eigenvectors are preserved in an isospectral reduction, then the Jordan form of $M$ is known, and its corresponding eigenvectors and chains of generalized eigenvectors can be reconstructed. We can reconstruct the original matrix $M$. This reconstruction is unique up to permutation of the nodes by $M=BJB^{-1}$, where $J$ is the Jordan form and $B=[u_1,u_2,v_2,\dots]$ are the corresponding eigenvectors and generalized eigenvectors.
\end{rk}

\section{Spectral equivalence of networks and of complex matrices}

In this section we introduce a more general than in \cite{webb14} notion of spectral equivalence of networks and compare it with standard spectral equivalence of matrices with complex entries.

Recall that the spectrum, $\sigma$, of a matrix is the union of all eigenvalues together with their multiplicities.

Let $\W_\pi\subset\W$ be the set of rational functions $p(\lambda)/q(\lambda)$ such that $\text{deg}(p)\le\text{deg}(q)$, where $\text{deg}(p)$ is the degree of the polynomial $p(\lambda)$. And let $\G_\pi\subset\G$ be the set of graphs $G=(V,E,w)$ such that $w:E\to\W_\pi$. Every graph in $\G_\pi$ can be isospectrally reduced \cite{webb14}.

Two weighted directed graphs $G_1=(V_1,E_1,w_1)$ and $G_2=(V_2,E_2,w_2)$ are \emph{isomorphic} if there is a bijection $b:V_1\to V_2$ such that there is an edge $e_{ij}$ in $G_1$ from $v_i$ to $v_j$ if and only if there is an edge $\tilde e_{ij}$ between $b(v_i)$ and $b(v_2)$ in $G_2$ with $w_2(\tilde e_{ij})=w_1(e_{ij})$. If the map $b$ exists, it is called an \emph{isomorphism}, and we write $G_1\simeq G_2$.

An isomorphism is essentially a relabeling of the vertices of a graph. Therefore, if two graphs are isomorphic, then their spectra are identical. The relation of being isomorphic is reflexive, symmetric, and transitive; in other words, it's an equivalence relation.

\begin{df}[Generalized Spectral Equivalence of Graphs]
Suppose that for each graph $G=(V,E,w)$ in $\G_\pi$, $\tau$ is a rule that selects a unique nonempty subset $\tau(G)\subset V$. Let $R_\tau$  be the isospectral reduction of $G$ onto $\tau(G)$. Then $R_\tau$ induces an equivalence relation $\sim$ on the set $\G_\pi$, where $G\sim H$ if $R_\tau^m(G)\simeq R_\tau^k(H)$ for some $m,k\in\mathbb{N}$ .
\end{df}

\begin{rk}
Observe that we do not require $\tau(G)$ to be a structural subset of $G$. However there is a unique isospectral reduction \cite{webb14} (possibly via a sequence of isospectral reductions to structural sets if $\tau(G)$ is not a structural subset of $G$) of $G$ onto $\tau(G)$.

Our definition of spectral equivalence of networks (graphs) is more general than the one in \cite{webb14}, where it was required that $m=k=1$. Therefore our classes of spectrally equivalent networks are larger than the ones considered in \cite{webb14}. Namely each class of equivalence in our sense consists of a countable number of equivalence classes in the sense of \cite{webb14}. Our approach/definition could be of use for analysis of real world networks many of which have a hierarchical structure \cite{ACM}, \cite{JKAM}.

Clearly any nonzero number is an eigenvector of any dimension 1 matrix. For this reason we do not consider reductions to one node since at that point all the geometric properties are lost.
\end{rk}

\begin{proof}
It is easy to see that the relation defined is reflexive and symmetric.

Suppose that $G\sim H$, with $R_\tau^m(G)\simeq R_\tau^s(H)$; $H\sim K$, with $R_\tau^r(H)\simeq R_\tau^t(K)$. Without any loss of generality, we assume $r>s$. Then
$$R_\tau^{m+r-s}(G)\simeq R_\tau^r(H)\simeq R_\tau^t(K),G\sim K.$$
\end{proof}

We call matrices that can be isospectrally reduced to the same matrix (up to permutation) spectrally equivalent. By lemma \ref{algebraic}, we have $\sigma(M)=\sigma(R)\cup[\sigma(M)\cap\sigma(M_{\overline S\overline S})]$ for $M\in\C^{n\times n}$. If 

\begin{equation}\label{seq}
\sigma(M)\cap\sigma(M_{\overline S\overline S})=\varnothing,
\end{equation} we have $\sigma(M)=\sigma(R)$.

\begin{prop}
Let $M_1,M_2\in\C^{n\times n}$, both can be reduced to the same matrix $R(\lambda)\in\W^{m\times m}$. Let them both satisfy \eqref{seq}. Then $M_1$ and $M_2$ have the same eigenvalues, with the same algebraic and geometric multiplicities for each eigenvalue. They also have the same eigenvectors for each eigenvalue. However, they can still have different Jordan forms, since the generalized eigenvectors are generally not preserved by isospectral reductions.
\end{prop}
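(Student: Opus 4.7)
The plan is to transport each positive claim from $M_i$ through the common reduction $R(\lambda)$ using results already established in the paper, and to settle the negative claim about Jordan forms by reusing the phenomenon already exhibited in Section~5. For $i=1,2$, let $S_i\subset\{1,\dots,n\}$ denote a structural set witnessing the reduction of $M_i$ to $R(\lambda)$; note that $|S_1|=|S_2|=m$ but the two sets themselves need not coincide, so every step below should use only the common object $R$ rather than any particular labelling of $S_i$.

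For eigenvalues and algebraic multiplicities, I would apply Lemma~\ref{algebraic} to each $M_i$ to obtain, as multisets,
\[\sigma(M_i)\;=\;\sigma(R)\,\cup\,\bigl[\sigma(M_i)\cap\sigma\bigl((M_i)_{\overline{S_i}\overline{S_i}}\bigr)\bigr]\;=\;\sigma(R),\]
where the second equality uses the standing hypothesis \eqref{seq}. Since $R$ is the same reduced matrix for $i=1$ and $i=2$, this immediately yields $\sigma(M_1)=\sigma(M_2)$ together with matching algebraic multiplicities. For geometric multiplicities I would invoke Theorem~\ref{mlt}: $g(\lambda_0,M_i)=g(\lambda_0,R(\lambda))$ for each $i$, and the right-hand side does not depend on $i$, so the two geometric multiplicities agree as well.

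For the eigenvectors themselves I would appeal to the bijection in Theorem~\ref{goback}. Every eigenvector $u\in\C^m$ of $R(\lambda_0)$ lifts, for each $i$ separately, to a unique eigenvector of $M_i$ whose $S_i$-component equals $u$ and whose $\overline{S_i}$-component is $-\bigl((M_i)_{\overline{S_i}\overline{S_i}}(\lambda_0)-\lambda_0 I\bigr)^{-1}(M_i)_{\overline{S_i}S_i}(\lambda_0)u$. Because the parametrising set of vectors $u$ lives on the common $R$-side, the two eigenvector bases of $M_1$ and $M_2$ coincide after restriction to their respective $S_i$; this is the sense in which $M_1$ and $M_2$ share eigenvectors, the $\overline S$-completion being intrinsic to each $M_i$ and determined uniquely by its off-diagonal blocks.

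Finally, the claim that the Jordan forms may still differ does not require any further argument beyond the example worked out in Section~5: the network of Figure~\ref{4node} has a nontrivial Jordan block at $\lambda_0=i$, while its isospectral reduction to $S=\{1,2,4\}$ produces an $R_S(i)$ that is diagonalisable there, so the generalised eigenvector is destroyed. A second matrix reducing to the same $R$ but whose off-diagonal blocks restore condition \eqref{blockgen} will carry a genuine rank-$2$ generalised eigenvector at $\lambda_0$ and therefore a Jordan form distinct from the first. The only point I expect to require care is keeping the reasoning \emph{symmetric in} $i$ despite the possible asymmetry between $S_1$ and $S_2$; since every conclusion above is phrased in terms of $R$ alone, this bookkeeping is entirely routine.
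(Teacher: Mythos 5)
Your handling of the positive claims is essentially the paper's own argument: the paper likewise combines Lemma~\ref{algebraic} with \eqref{seq} to get $\sigma(M_1)=\sigma(R)=\sigma(M_2)$ as multisets, and then uses the bijection of Theorem~\ref{goback} (your invocation of Theorem~\ref{mlt} is just that bijection repackaged) for the geometric multiplicities and the eigenvectors. You are in fact more careful than the paper on one point: the paper asserts flatly that $M_1$ and $M_2$ ``have the same eigenvectors,'' citing the reconstruction of \cite{torres14}, whereas you correctly observe that the $\overline{S_i}$-completion of a lifted eigenvector depends on the blocks of $M_i$, so the identification really lives on the common $R$-side. Where the two arguments genuinely diverge is the final ``however'' clause. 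The paper does not reason about reductions at all there; it simply exhibits an explicit pair $A_1,A_2\in\C^{4\times4}$ with identical eigenvalues, identical algebraic and geometric multiplicities, and identical eigenvectors, yet with Jordan types $2+2$ versus $3+1$, showing that the invariants just established cannot determine the Jordan form. You instead point to the Figure~\ref{4node} example and then \emph{assert} that some second matrix reducing to the same $R$ restores \eqref{blockgen} and hence carries a rank-2 generalized eigenvector; that second matrix is never produced, so as written this step is an unproved existence claim rather than an argument. To be fair, the paper's own example is also only illustrative (its $A_1,A_2$ are not shown to reduce to a common $R$, and indeed cannot satisfy \eqref{seq} since every principal submatrix has eigenvalue $5$), so the clause is a remark rather than a theorem in both treatments; but if you want your version to carry the same evidentiary weight, you should either construct the second matrix explicitly or fall back on an invariants-don't-determine-Jordan-form example of the paper's kind.
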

\begin{proof}
Since $M_1$ and $M_2$ both satisfy \eqref{seq}, we have $\sigma(M_1)=\sigma(R)=\sigma(M_2)$, i.e. $M_1$ and $M_2$ have the same eigenvalues and the same algebraic multiplicity for each eigenvalue.

Theorem \ref{goback} implies that the eigenvectors of $R(\lambda)$ are bijective projections of eigenvectors of $M_1$, as well as $M_2$. By the reconstruction of eigenvectors in \cite{torres14}, we know $M_1$ and $M_2$ have the same eigenvectors for each eigenvalue, thus the same geometric multiplicity for each eigenvalue.

However, two matrices with the same eigenvalues, with the same algebraic and geometric multiplicities for each eigenvalue, and the same eigenvectors for each eigenvalue can still have different Jordan form. For example,
$$A_1=\begin{pmatrix}5 & 1 & 0 & 0\\ 0 & 5 & 0 & 0 \\ 0 & 0 & 5 & 1\\ 0 & 0 & 0 & 5\end{pmatrix}=
\begin{pmatrix}1 & 0 & 0 & 0 \\ 0 & 1 & 0 & 0 \\ 0 & 0 & 1 & 0 \\ 0 & 0 & 0 & 1\end{pmatrix}\begin{pmatrix}5 & 1 & 0 & 0\\ 0 & 5 & 0 & 0 \\ 0 & 0 & 5 & 1 \\ 0 & 0 & 0 & 5\end{pmatrix}
\begin{pmatrix}1 & 0 & 0 & 0 \\ 0 & 1 & 0 & 0 \\ 0 & 0 & 1 & 0 \\ 0 & 0 & 0 & 1\end{pmatrix},$$
$$A_2=\begin{pmatrix}5 & 0 & 0 & 0 \\ 0 & 5 & 0 & 1 \\ 0 & 1 & 5 & 0\\ 0 & 0 & 0 & 5\end{pmatrix}=\begin{pmatrix}1 & 0 & 0 & 0 \\ 0 & 0 & 1 & 0 \\ 0 & 1 & 0 & 0 \\ 0 & 0 & 0 & 1\end{pmatrix}
\begin{pmatrix}5& 0 & 0 & 0 \\ 0 & 5 & 1 & 0 \\ 0 & 0 & 5 & 1 \\ 0 & 0 & 0 & 5\end{pmatrix}\begin{pmatrix}1 & 0 & 0 & 0 \\ 0 & 0 & 1 & 0 \\ 0 & 1 & 0 & 0 \\ 0 & 0 & 0 & 1\end{pmatrix},$$
$A_1$ and $A_2$ both have eigenvalue $5$ with algebraic multiplicity $4$ and geometric multiplicity $2$. They also have the same eigenvectors for eigenvalue $5$, i.e. $u_1=(1,0,0,0)^T,u_2=(0,0,1,0)^T$. But $A_1$'s Jordan form consists of 2 size 2 Jordan blocks and $A_2$'s Jordan form consists of 1 simple eigenvalue and a size 3 Jordan block, they have different Jordan forms.
\end{proof}

If $M_1$ satisfies \eqref{seq} but $M_2$ does not, it is known that $M_2$ loses some eigenvalues (those in the intersection in \eqref{seq}) when reduced to $R$ while $M_1$ does not.  Therefore, $\sigma(M_2)\supsetneq\sigma(M_1)$ and the matrix $M_2$ has a higher dimension than $M_1$. 

Not all similar matrices are spectrally equivalent. For example, a matrix that is already in Jordan form always has eigenvalues in $\overline S$. It will lose eigenvalues in $\overline S$ after reduction. For similar matrices that satisfy \eqref{seq}, their isospectral reductions will have the same eigenvalues, with the same algebraic and geometric multiplicities. However, reductions of these matrices may not be the same.

For example, matrices $A$ and $B$ down below have the same eigenvalues.
$$A=\begin{pmatrix}1&5&2\\3&6&8\\4&7&9\end{pmatrix}^{-1}\begin{pmatrix}1&0&0\\0&2&0\\0&0&3\end{pmatrix}\begin{pmatrix}1&5&2\\3&6&8\\4&7&9\end{pmatrix}
=\frac{1}{17}\begin{pmatrix}148&206&256\\-13&-5&-28\\-33&-48&-41\end{pmatrix},$$
$$B=\begin{pmatrix}2 & 1 & 0 \\ 7 & 3 & 5 \\ 8 & 9 & 4\end{pmatrix}^{-1}\begin{pmatrix}1 & 0 & 0 \\ 0 & 2 & 0 \\ 0 & 0 & 3\end{pmatrix}\begin{pmatrix}2 & 1 & 0 \\ 7 & 3 & 5 \\ 8 & 9 & 4\end{pmatrix}
=\frac{1}{27}\begin{pmatrix}1 & -39 & -10 \\ 52 & 105 & 20 \\ 43 & 24 & 56\end{pmatrix}.$$

The matrix $A$ has 3 listed below dimension-2 isospectral reductions.

$$R_{12}(A)=\frac{1}{17\lambda+41}\begin{pmatrix}148\lambda-140 & 206\lambda-226\\23-13\lambda & 67-5\lambda\end{pmatrix},$$
$$R_{13}(A)=\frac{1}{17\lambda+5}\begin{pmatrix}148\lambda-114 & 256\lambda-264\\27-33\lambda & 67-41\lambda\end{pmatrix},$$
$$R_{23}(A)=\frac{1}{17\lambda-148}\begin{pmatrix}-5\lambda-114 & 48-28\lambda\\18-48\lambda & -41\lambda-140\end{pmatrix}.$$

The matrix $B$ also has 3 dimension-2 isospectral reductions.
$$R_{12}(B)=\frac{1}{27\lambda-56}\begin{pmatrix}\lambda-18 & 72-39\lambda\\ 52\lambda-76 & 105\lambda-200\end{pmatrix},$$
$$R_{13}(B)=\frac{1}{27\lambda-105}\begin{pmatrix}\lambda-79 & 10-10\lambda \\ 43\lambda-121 & 56\lambda-200\end{pmatrix},$$
$$R_{23}(B)=\frac{1}{27\lambda-1}\begin{pmatrix}105\lambda-79 & 20\lambda-20 \\ 24\lambda-63 & 56\lambda-18\end{pmatrix}.$$

It is easy to see that there is no pair of reductions, one for $A$ and one for $B$, which are the same, meaning that one is a permutation of the other. Even though $A$ and $B$ are similar matrices that both satisfy \eqref{seq}, they are not spectrally equivalent.

When \eqref{seq} does not hold, the eigenvalues which belong to both $\sigma(M)$ and $\sigma(M_{\overline S\overline S})$ will be lost after reduction or their multiplicities will decrease. Theorems 1, 2, 3, 4 and 5 require $(M_{\overline S\overline S}(\lambda_0)-\lambda_0I)$ to be invertible, which implies $\lambda_0\not\in\sigma(M_{\overline S\overline S})$. Therefore in this case $\lambda_0$ doesn't belong to the intersection in \eqref{seq}.


\appendix

\section{Detailed computations for the network in figure 1}
\label{appendix-sec1}

We provide here the exact analytic computations  for the network depicted in Fig.1. For each structural set of this network the  entry-wise condition \eqref{gen-eig} for example 1 is verified.
\begin{itemize}

\item $S=\{1,4\}$\\
For eigenvalue $i$,
\begin{align*}
\frac{R_{12}}{i-\omega(2,2)}u_2+\frac{R_{13}}{i-\omega(3,3)}u_3=-i(R_{12}(-1)-iR_{13})=iR_{12}-R_{13}=i+i=2i=2u_1;\\
\frac{R_{42}}{i-\omega(2,2)}u_2+\frac{R_{43}}{i-\omega(3,3)}u_3=iR_{42}-R_{43}=0-(-2)=2=2u_4.
\end{align*}
For eigenvalue $-i$,
\begin{align*}
\frac{R_{12}}{-i-\omega(2,2)}u_2+\frac{R_{13}}{-i-\omega(3,3)}u_3=i(R_{12}i+R_{13})=-R_{12}+iR_{13}=-1-1=-2=2u_1;\\
\frac{R_{42}}{-i-\omega(2,2)}u_2+\frac{R_{43}}{-i-\omega(3,3)}u_3=-R_{42}+iR_{43}=0-2i=-2i=2u_4.
\end{align*}
One can check that the reduction preserves the generalized eigenvectors in this case.

\item $S=\{2,4\}$\\
For eigenvalue $i$,
\begin{align*}
\frac{R_{21}}{i-\omega(1,1)}u_1+\frac{R_{23}}{i-\omega(3,3)}u_3=-i(iR_{21}-iR_{23})=R_{21}-R_{23}=-1=u_2;\\
\frac{R_{41}}{i-\omega(1,1)}u_1+\frac{R_{43}}{i-\omega(3,3)}u_3=R_{41}-R_{43}=-1-(-2)=1=u_4.
\end{align*}
For eigenvalue $-i$,
\begin{align*}
\frac{R_{21}}{-i-\omega(1,1)}u_1+\frac{R_{23}}{-i-\omega(3,3)}u_3=i(-R_{21}+R_{23})=-iR_{21}+iR_{23}=0+i=i=u_2;\\
\frac{R_{41}}{-i-\omega(1,1)}u_1+\frac{R_{43}}{-i-\omega(3,3)}u_3=-iR_{41}+iR_{43}=i-2i=-i=u_4.
\end{align*}
One can check that the reduction preserves the generalized eigenvectors here.

\item $S=\{3,4\}$\\
For eigenvalue $i$,
\begin{align*}
\frac{R_{31}}{i-\omega(1,1)}u_1+\frac{R_{32}}{i-\omega(2,2)}u_2=-i(iR_{31}-R_{32})=R_{31}+iR_{32}=0;\\
\frac{R_{41}}{i-\omega(1,1)}u_1+\frac{R_{42}}{i-\omega(2,2)}u_2=R_{41}+iR_{42}=-1-1=-2=-2u_4.
\end{align*}
For eigenvalue $-i$,
\begin{align*}
\frac{R_{31}}{-i-\omega(1,1)}u_1+\frac{R_{32}}{-i-\omega(2,2)}u_2=i(-R_{31}+iR_{32})=-iR_{31}-R_{32}=0;\\
\frac{R_{41}}{-i-\omega(1,1)}u_1+\frac{R_{42}}{-i-\omega(2,2)}u_2=-iR_{41}-R_{42}=i+i=2i=-2u_4.
\end{align*}
Here the generalized eigenvectors are not preserved. Observe that the structural set in this case contains a complete cycle.

\item $S=\{1,3\}$\\
For eigenvalue $i$,
\begin{align*}
\frac{R_{12}}{i-\omega(2,2)}u_2+\frac{R_{14}}{i-\omega(4,4)}u_4=-i(-R_{12}+R_{14})=iR_{12}-iR_{14}=i=u_1;\\
\frac{R_{32}}{i-\omega(2,2)}u_2+\frac{R_{34}}{i-\omega(4,4)}u_4=iR_{32}-iR_{34}=-i=u_3.
\end{align*}
For eigenvalue $-i$,
\begin{align*}
\frac{R_{12}}{-i-\omega(2,2)}u_2+\frac{R_{14}}{-i-\omega(4,4)}u_4=i(iR_{12}-iR_{14})=-R_{12}+R_{14}=-1=u_1;\\
\frac{R_{32}}{-i-\omega(2,2)}u_2+\frac{R_{34}}{-i-\omega(4,4)}u_4=-R_{32}+R_{34}=1=u_3.
\end{align*}
One can check that the reduction preserves the generalized eigenvectors here.

\item $S=\{2,3\}$\\
For eigenvalue $i$,
\begin{align*}
\frac{R_{21}}{i-\omega(1,1)}u_1+\frac{R_{24}}{i-\omega(4,4)}u_4=-i(iR_{21}+R_{24})=R_{21}-iR_{24}=0;\\
\frac{R_{31}}{i-\omega(1,1)}u_1+\frac{R_{34}}{i-\omega(4,4)}u_4=R_{31}-iR_{34}=0.
\end{align*}
For eigenvalue $-i$,
\begin{align*}
\frac{R_{21}}{-i-\omega(1,1)}u_1+\frac{R_{24}}{-i-\omega(4,4)}u_4=i(-R_{12}-iR_{14})=-iR_{21}+R_{24}=0;\\
\frac{R_{31}}{-i-\omega(1,1)}u_1+\frac{R_{34}}{-i-\omega(4,4)}u_4=-iR_{31}+R_{34}=-1+1=0.
\end{align*}
One can check that the reduction preserves the generalized eigenvectors here.

\item $S=\{1,2,4\}$\\
For eigenvalue $i$,
\begin{align*}
\frac{R_{13}}{i-\omega(3,3)}u_3=-R_{13}=0;\quad\frac{R_{23}}{i-\omega(3,3)}u_3=-R_{23}=-1=u_2;\quad\frac{R_{43}}{i-\omega(3,3)}u_3=2u_4.
\end{align*}
For eigenvalue $-i$,
\begin{align*}
\frac{R_{13}}{-i-\omega(3,3)}u_3=iR_{13}=0;\quad\frac{R_{23}}{-i-\omega(3,3)}u_3=iR_{23}=i=u_2;\quad\frac{R_{43}}{-i-\omega(3,3)}u_3=2u_4.
\end{align*}
This does not satisfy the condition. One can check that the reduction only preserves the eigenvector here.

\item $S=\{1,3,4\}$\\
For eigenvalue $i$,
\begin{align*}
\frac{R_{12}}{i-\omega(2,2)}u_2=iR_{12}=i=u_1;\quad\frac{R_{32}}{i-\omega(2,2)}u_2=iR_{32}=0;\quad\frac{R_{42}}{i-\omega(2,2)}u_2=0.
\end{align*}
For eigenvalue $-i$,
\begin{align*}
\frac{R_{12}}{-i-\omega(2,2)}u_2=-R_{12}=-1=u_1;\quad\frac{R_{32}}{-i-\omega(2,2)}u_2=-R_{32}=0;\quad\frac{R_{42}}{-i-\omega(2,2)}u_2=0.
\end{align*}
This does not satisfy the condition. One can check that the reduction only preserves the eigenvector here.

\item $S=\{2,3,4\}$\\
For eigenvalue $i$,
\begin{align*}
\frac{R_{21}}{i-\omega(1,1)}u_1=R_{21}=0;\quad\frac{R_{31}}{i-\omega(1,1)}u_1=R_{31}=0;\quad\frac{R_{41}}{i-\omega(1,1)}u_1=-1=-u_4.
\end{align*}
For eigenvalue $-i$,
\begin{align*}
\frac{R_{21}}{-i-\omega(1,1)}u_1=-iR_{21}=0;\quad\frac{R_{31}}{-i-\omega(1,1)}u_1=0;\quad\frac{R_{41}}{-i-\omega(1,1)}u_1=i=-u_4.
\end{align*}
This does not satisfy the condition. One can check that the reduction only preserves the eigenvector here.

\item $S=\{1,2,3\}$\\
For eigenvalue $i$,
\begin{align*}
\frac{R_{14}}{i-\omega(4,4)}u_4=-iR_{14}=0;\quad\frac{R_{24}}{i-\omega(4,4)}u_4=0;\quad\frac{R_{34}}{i-\omega(4,4)}u_4=-i=u_3.
\end{align*}
For eigenvalue $-i$,
\begin{align*}
\frac{R_{14}}{-i-\omega(4,4)}u_4=R_{14}=0;\quad\frac{R_{24}}{-i-\omega(4,4)}u_4=0;\quad\frac{R_{34}}{-i-\omega(4,4)}u_4=1=u_3.
\end{align*}
This does not satisfy the condition. One can check that the reduction only preserves the eigenvector here.
\end{itemize}

\section*{Acknowledgements}

This work was partially supported by the NSF grant DMS-1600568.



\begin{thebibliography}{9}

\bibitem{webb14}
Leonid Bunimovich, Benjamin Webb, Isospectral Transformations, Springer, 2014.


\bibitem{torres14}
Pedro Duarte, Maria Joana Torres, Eigenvectors of isospectral graph transformations, Linear Algebra and its Applications 474(2015)110-123.

\bibitem{ACM}
A. Clauset, C. Moore, M. Newman, Hierarchical structure and the prediction of missing links in networks, Nature 453(2008)98-101.


\bibitem{JKAM}
J. Leskovec, K. Lang, A. Dasgupta, M. Mahoney, Statistical Properties of Community Structure in Large Social and Information Networks, Proceedings of the 17th International Conference on World Wide Web (2008)695-704.


\end{thebibliography}

\end{document}